\DeclarePairedDelimiter\floor{\lfloor}{\rfloor}
\newcommand{\ttrank}{{\rm rank}^{\rm TT}}
\newcommand{\mlrank}{{\rm rank}^{\rm ML}}
\newcommand{\ttsto}{p^{{\rm TT}}}
\newcommand{\mlsto}{p^{{\rm ML}}}
\newcommand{\cpsto}{p^{{\rm CP}}}
\newcommand{\rank}{{\rm rank}}
\newcommand{\red}{\textcolor{black}}
\title{On the compressibility of tensors\thanks{Submitted to the editors \today.
\funding{This work is supported by National Science Foundation grant no.~1818757.}}}
\author{Tianyi Shi\thanks{Center for Applied Mathematics, Cornell University, Ithaca, NY 14853. (\email{ts777@cornell.edu})} \and Alex Townsend\thanks{Department of Mathematics, Cornell University, Ithaca, NY  14853. (\email{townsend@cornell.edu})}}
\begin{document}
\newcommand{\R}[0]{\mathbb{R}}
\newcommand{\C}[0]{\mathbb{C}}
\maketitle

\begin{abstract}
Tensors are often compressed by expressing them in low rank tensor formats.  In this paper, we develop three methodologies that bound the compressibility of a tensor: (1) Algebraic structure, (2) Smoothness, and (3) Displacement structure. For each methodology, we derive bounds on \red{storage costs} that partially explain the abundance of compressible tensors in applied mathematics. For example, we show that the solution tensor $\mathcal{X} \in \C^{n \times n \times n}$ of a discretized Poisson equation $-\nabla^2 u =1$ on $[-1,1]^3$ with zero Dirichlet conditions can be approximated to a relative accuracy of $0<\epsilon<1$ \red{in the Frobenius norm by a tensor in tensor-train format with $\mathcal{O}(n (\log n)^2 (\log(1/\epsilon))^2)$ degrees of freedom. As this bound is constructive, we are also able to solve this equation spectrally with $\mathcal{O}(n (\log n)^3 (\log(1/\epsilon))^3)$ complexity}.
\end{abstract}

\begin{keywords}
numerical low rank, Tensor-Train, Multilinear, Canonical Polyadic, displacement
\end{keywords}

\begin{AMS}
15A69, 65F99
\end{AMS}

\section{Introduction}\label{sec:introduction}
A wide variety of applications, \red{such as approximation theory~\cite{khoromskij2011tensor}, continuum mechanics~\cite{eshelby1999energy}, differential equations~\cite{khoromskij2011numerical,kressner2011preconditioned}, and data analysis~\cite{lu2011survey}}, lead to problems involving data or solutions that can be represented by tensors~\cite{kolda2009tensor}. A general $d$-order tensor $\mathcal{X} \in\mathbb{C}^{n_1\times\cdots \times n_d}$ has $\prod_{k=1}^d n_k$ entries, which prevents it from being stored explicitly except for modest $d$.  It is often essential to represent or approximate tensors using sparse data formats, such as low rank tensor decompositions~\cite{grasedyck2013literature,kolda2009tensor}. However, the need for data sparse formats does not imply that such approximations are always mathematically possible. In this paper, we derive bounds on numerical tensor ranks for certain families of tensors, and in doing so, we partially justify the use of low rank tensor decompositions. Analogous theoretical results have already been derived that explicitly bound the numerical rank of matrices~\cite{beckermann2017singular,massei2018solving,reade1983eigenvalues,townsend2017singular}.

The situation for tensors is more complicated than for matrices, and this is reflected in several distinct low rank tensor decompositions~\cite{kolda2009tensor,oseledets2011tensor}.  Here, we consider three such decompositions:  (a) Tensor-train decomposition (see~\cref{sec:TT}), (b) Orthogonal Tucker decomposition (see~\cref{sec:OrthogonalTucker}), and (c) Canonical Polyadic (CP) decomposition (see~\cref{sec:CPdecomposition}). These three tensor decompositions supply three different definitions of tensor rank, and therefore each one requires separate attention.  

For a given tensor $\mathcal{X} \in\mathbb{C}^{n_1\times\cdots \times n_d}$, we are interested in developing a variety of tools to theoretically explain whether there exists a low rank tensor $\tilde{\mathcal{X}}\in\mathbb{C}^{n_1\times\cdots \times n_d}$, in one or more of the tensor formats, such that
\begin{equation} 
\| \mathcal{X} - \tilde{\mathcal{X}} \|_F \leq \epsilon \| \mathcal{X} \|_F, \qquad \|\mathcal{X}\|_F^2 = \sum_{i_1=1}^{n_1} \cdots \sum_{i_d = 1}^{n_d} |\mathcal{X}_{i_1,\ldots,i_d}|^2,
\label{eq:FrobeniusNorm}
\end{equation} 
where $0\leq\epsilon<1$ is an accuracy tolerance. If $\mathcal{X}$ can be well-approximated by $\tilde{\mathcal{X}}$, then dramatic storage and computational benefits can be achieved by replacing $\mathcal{X}$ by $\tilde{\mathcal{X}}$~\cite{grasedyck2013literature,hackbusch2012tensor}.  \red{We say that a tensor is compressible if it can be approximated by a low rank tensor, in the sense of~\eqref{eq:FrobeniusNorm} that can be represented in a relative small number of degrees of freedom. In order to compare different low rank tensor formats, we examine the number of degrees of freedom required to store an approximate tensor.}

In this paper, we explore three methodologies to bound the \red{compressibility} of a tensor:
\begin{itemize}[leftmargin=*,noitemsep] 
\item \textbf{Algebraic structures:} If a tensor is constructed by sampling a multivariable function that can be expressed as a sum of products of single-variable functions, then that tensor \red{is often compressible}. Occasionally, one may have to perform algebraic manipulations to a function to explicitly reveal its \red{desired} structure, for example, by using trigonometric identities (see~\cref{sec:algebraic}).

\item \textbf{Smoothness:} If a tensor can be constructed by sampling a smooth function on a tensor-product grid, \red{then that tensor is often compressible}. This observation can be made rigorous by using the fact that smooth functions on compact domains can be well-approximated by polynomials~\cite{trefethen2013approximation, hackbusch2012tensor}. 
  
\item \textbf{Displacement structure:} If a tensor $\mathcal{X}$ satisfies a multidimensional Sylvester equation of the form:  
\begin{equation} 
\mathcal{X} \times_1 A^{(1)} + \cdots + \mathcal{X} \times_d A^{(d)} = \mathcal{G}, \qquad A^{(k)} \in \mathbb{C}^{n_k\times n_k}, \quad \mathcal{G} \in\mathbb{C}^{n_1\times \cdots \times n_d},
\label{eq:TensorDisplacement} 
\end{equation} 
where `$\times_k$' denotes the $k$-mode matrix product of a tensor (see~\cref{eq:kfold}), then this---under additional assumptions---can ensure that the tensor $\mathcal{X}$ is well-approximated by a low rank tensor. Multidimensional Sylvester equations such as~\cref{eq:TensorDisplacement} appear when discretizing certain partial differential equations with finite differences~\cite{leveque2007finite} and are satisfied by several classes of structured tensors~\cite{grigorascu1999tensor}. For example, we show that the solution tensor $\mathcal{X} \in \C^{n \times n \times n}$ to $-\nabla^2u=1$ on $[-1,1]^3$ can be represented up to a relative accuracy of $0<\epsilon<1$ \red{in the Frobenius norm} with just $\mathcal{O}(n(\log n)^2 (\log(1/\epsilon))^2)$ degrees of freedom in tensor-train format, despite the solution having weak corner singularities.
\end{itemize} 

\red{The first two methodologies are considered in~\cite{hackbusch2012tensor}, and the third methodology is related to the literature on exponential sums~\cite{grasedyck2004existence,khoromskij2009tensor}. In this manuscript, we formally provide bounds on the compressibility of such tensors and illustrate the methodologies with worked examples. } 

After some experience, one can successfully identify which methodology is likely to result in the best theoretical bounds on the compressibility of a tensor. We emphasize that these three methodologies provide upper bounds \red{using} numerical tensor ranks, and do not provide a complete characterization on the \red{compressibility} of tensors. Another approach that partially explains the abundance of tensors with small storage is artificial coordinate alignment~\cite{trefethen2017cubature}, though we do not know how to use this observation to derive explicit bounds on tensor ranks. 

\subsection{Tensor notation} 
We follow as closely as possible the notation for tensors found in~\cite{kolda2009tensor}, which we briefly review now for the reader's convenience.

\begin{description}[leftmargin=*,noitemsep]
\item[The $k$-mode product.] The $k$-fold (or $k$-mode) product of a tensor $\mathcal{X}\in\C^{n_1\times\cdots\times n_d}$ with a matrix $A\in\mathbb{C}^{n_k\times n_k}$ is denoted by $\mathcal{X} \times_k A$, and defined elementwise as
\begin{equation}
(\mathcal{X} \times_k A)_{i_1,\ldots,i_{k-1},j,i_{k+1},\ldots,i_d} = \sum_{i_k = 1}^{n_k} \mathcal{X}_{i_1,\ldots,i_d}A_{j,i_k}.
\label{eq:kfold} 
\end{equation} 
It corresponds to each mode-$k$ fiber of $X$ being multiplied by the matrix $A$. 

\item[Double bracket.]
In the tensor literature, the double bracket \red{denotes a mapping from the parametric space to the space of tensors. Specifically, it can be considered as} a weighted sums of rank-1 tensors, i.e., 
\begin{equation} 
\llbracket \mathcal{G} ; A^{(1)},\ldots,A^{(d)} \rrbracket =  \sum_{i_1=1}^{r_1} \cdots \sum_{i_d=1}^{r_d} \mathcal{G}_{i_1,\ldots,i_d} A^{(1)}_{i_1} \circ \cdots \circ A^{(d)}_{i_d}, \qquad A^{(k)} \in \mathbb{C}^{n_k\times r_k}, 
\label{eq:bracket} 
\end{equation} 
where $\mathcal{G} \in \mathbb{C}^{r_1\times \cdots \times r_d}$ is often referred to as the core tensor and $v_1\circ \cdots \circ v_d$ is the $d$-way outer-product of vectors~\cite{kolda2009tensor}.

\item[Flattening by reshaping.]
One can always reorganize the entries of a tensor into a matrix and this idea is fundamental to the tensor-train decomposition~\cite{oseledets2011tensor}. Conventionally, one reorganizes the entries so that the mode-1 fibers are stacked. This is equivalent to $X_k={\rm reshape}(\mathcal{X},\prod_{s=1}^k n_s,\prod_{s=k+1}^d n_s)$.\footnote{In MATLAB, the reshape command reorganizes the entries of a tensor. For example, if $\mathcal{X} \in \C^{n_1 \times \dots \times n_d}$, then ${\rm reshape}(\mathcal{X},\prod_{s=1}^k n_s,\prod_{s=k+1}^d n_s)$ returns a matrix of size $(\prod_{s=1}^k n_s) \times (\prod_{s=k+1}^d n_s)$ formed by stacking entries according to their multi-index.} We call $X_k$ the $k$th unfolding of $\mathcal{X}$.

\item[Flattening via matricization.]
Another way to flatten a tensor is called mode-$n$ matricization (or $n$th matricization), which arranges the mode-$n$ fibers to be the columns of a matrix~\cite{kolda2006multilinear}. We denote the mode-$n$ matricization of a tensor $\mathcal{X}$ by $X_{(n)}$. It is easy to see that the first unfolding and the mode-1 matricization of a tensor are identical, i.e., $X_{(1)}=X_1$. In this paper, for a tensor $\mathcal{X}$, matricizations are constructed so that there exists another tensor $\mathcal{Y}^j$ satisfying~\cite{de2000multilinear} 
\begin{equation} \label{eq:cyc_mat}
Y^j_{(1)}=X_{(j)}, \ \dots, \ Y^j_{(d-j+1)}=X_{(d)}, \ Y^j_{(d-j+2)}=X_{(1)}, \ \dots,\  Y^j_{(d)}=X_{(j-1)}.
\end{equation}
\end{description} 

\subsection{Summary of paper}
In the next section, we review three tensor decompositions, and in~\cref{sec:algebraic} we study the ranks of tensors that are constructed by sampling multivariate functions that have some algebraic structure. In~\cref{sec:smoothness}, we consider the \red{storage cost} of tensors constructed by sampling smooth multivariate functions. Finally, in~\cref{sec:displacement} we consider tensors that satisfy a multidimensional Sylvester equation\red{, including a fast tensor Sylvester equation solver that exploits the compressibility of these tensors~\cref{sec:poisson_solver}}.

\section{Three tensor decompositions}\label{sec:tensorDecompositions} 
In this section, we review three tensor decompositions: (a) Tensor-train decomposition, (b) Orthogonal Tucker decomposition, and (c) CP decomposition. \red{For each decomposition, and a given $0 < \epsilon < 1$, we say a tensor $\mathcal{X}$ is $p$-compressible if there exists a tensor $\mathcal{\tilde{X}}$ that can be represented with $p$ degrees of freedom and $||\mathcal{X}-\mathcal{\tilde{X}}||_F \le \epsilon ||\mathcal{X}||_F$.}

\subsection{Tensor-train decomposition}\label{sec:TT} 
The tensor-train decomposition is a generalization of the singular value decomposition (SVD) that can be computed by a sequence of matrix SVDs~\cite{oseledets2011tensor, oseledets2009breaking}.  A tensor $\mathcal{X}\in\mathbb{C}^{n_1\times \cdots \times n_d}$ has a tensor-train rank of at most $\pmb{s}=(s_0,\ldots,s_d)$, if there exists matrix-valued functions $G_k : \{1,\ldots,n_k\} \mapsto \mathbb{C}^{s_{k-1}\times s_k}$ for $1\leq k\leq d$ such that 
\begin{equation}
\mathcal{X}_{i_1,\ldots,i_d} = G_1(i_1)G_2(i_2) \cdots G_d(i_d), \qquad 1\leq i_k \leq n_k.
\label{eq:TensorTrain} 
\end{equation} 
This decomposition writes each entry of $\mathcal{X}$ as a product of matrices, where the $k$th matrix in the ``train" of length $d$ is determined by $i_k$. Since the product of the matrices must always be a scalar, we have $s_0 = s_d = 1$.  Each $G_k$ can be represented by an $s_{k-1}\times n_k \times s_k$ tensor so a tensor-train decomposition of rank at most $\pmb{s}$ requires $\ttsto \le \sum_{k=1}^d s_{k-1}s_k n_k$ \red{degrees of freedom to store the format in memory}. \Cref{fig:TT} illustrates a tensor-train decomposition of rank at most $\pmb{s} = (s_0,\ldots,s_d)$.
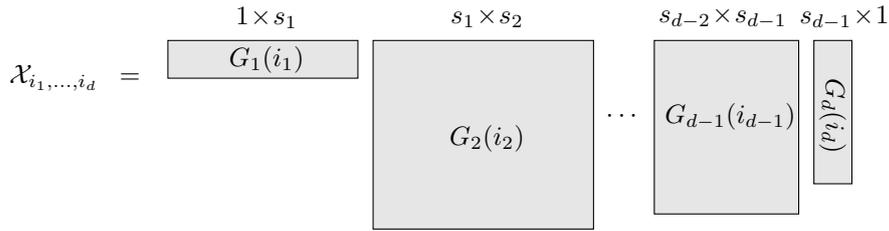
\begin{figure} 
\centering 
\begin{tikzpicture}
\filldraw[black] (0,-0.5)  node {$\mathcal{X}_{i_1,\ldots,i_d}$};
\filldraw[black] (1,-0.5) node {$=$};
\filldraw[color=black,fill=gray!20] (1.5,0) rectangle (4,-.5);
\filldraw[black] (2.8,-0.25) node {$G_1(i_1)$};
\filldraw[black] (2.8,0.3) node {$1\! \times \! s_1$};
\filldraw[color=black,fill=gray!20] (4.2,0) rectangle (7.1,-2.5);
\filldraw[black] (5.7,-1.3) node {$G_2(i_2)$};
\filldraw[black] (5.7,0.3) node {$s_1 \! \times \! s_2$};
\filldraw[black] (7.5,-1) node {$\cdots$};
\filldraw[color=black,fill=gray!20] (7.9,0) rectangle (9.8,-2.3);
 \filldraw[black] (8.9,-1) node {$G_{d-1}(i_{d-1})$};
\filldraw[black] (8.8,0.3) node {$s_{d-2} \! \times \! s_{d-1}$};
\filldraw[color=black,fill=gray!20] (10,0) rectangle (10.5,-1.9);
\filldraw[black] (10.25,-1) node {\rotatebox{270}{$G_{d}(i_{d})$}};
\filldraw[black] (10.4,0.3) node {$s_{d-1} \! \times \! 1$};
\end{tikzpicture}
\caption{The tensor-train decomposition of rank at most $\pmb{s} = (s_0,\ldots,s_d)$. Each entry of a tensor is represented by the product of $d$ matrices, where the $k$th matrix in the ``train" is selected based on the value of $i_k$.}
\label{fig:TT}
\end{figure} 

Normally a tensor-train decomposition is constructed by separating out one dimension at a time, and compressing each dimension in turn~\cite{oseledets2011tensor}. For simplicity, the decomposition considered in this paper is performed in the order of dimension 1, dimension 2, and so on. In this way, the entries of the tensor-train rank are bounded from above by the ranks of matrices formed by flattening~\cite{oseledets2011tensor}. That is, for $1 \le k \le d-1$ we have
\begin{equation} \label{eq:TT_trivial}
s_k \leq {\rm rank}(X_k), \qquad X_k={\rm reshape}(\mathcal{X},\prod_{s=1}^k n_s,\prod_{s=k+1}^d n_s),
\end{equation}
where $\rank(X_k)$ is the rank of the matrix $X_k$.  Therefore, if the ranks of all the matrices $X_k$ for $1\leq k\leq d-1$ are small, then the tensor $X$ can be exactly represented in a data-sparse format as a tensor-train decomposition. 


\subsection{Orthogonal Tucker decomposition}\label{sec:OrthogonalTucker}
The orthogonal Tucker decomposition is a factorization of a tensor into a set of matrices and a core tensor, where the matrices have orthonormal columns~\cite{hitchcock1927expression,kolda2009tensor,de2000multilinear}. A tensor $\mathcal{X} \in \mathbb{C}^{n_1\times\cdots \times n_d}$ has a multilinear rank\footnote{The closely related Tucker rank of a tensor is also associated to the Tucker decomposition, except the matrices $A_k$ in~\cref{eq:Tucker} are not constrained to have orthonormal columns. Since multilinear ranks are more commonly used in applications, we do not consider Tucker ranks in this paper.} of at most $\pmb{t}=(t_1,\ldots,t_d)$, if there are matrices $A_1,\ldots,A_d$ with orthonormal columns and a core tensor $\mathcal{G} \in\mathbb{C}^{t_1 \times \cdots \times t_d}$ such that 
\begin{equation}
\mathcal{X} = \llbracket \mathcal{G}; A^{(1)}, \ldots, A^{(d)} \rrbracket,  \qquad A^{(k)} \in \mathbb{C}^{n_k\times t_k}.
\label{eq:Tucker} 
\end{equation}
Such a decomposition contains $\red{\mlsto \le} \sum_{k=1}^{d} n_k t_k + \prod_{k=1}^{d} t_k$ degrees of freedom, and can be computed by the so-called higher-order singular value decomposition~\cite{de2000multilinear}. 


\subsection{Canonical Polyadic decomposition}\label{sec:CPdecomposition} 
The CP decomposition expresses a tensor as a sum of rank-1 tensors. A tensor $\mathcal{X} \in \mathbb{C}^{n_1\times\cdots \times n_d}$ is of rank at most $r$, if there are matrices $A^{(1)},\ldots,A^{(d)}$ and a diagonal tensor $D$ that
\begin{equation} 
\mathcal{X} = \llbracket \mathcal{D} ; A^{(1)}, \ldots, A^{(d)} \rrbracket, \qquad A^{(k)}\in\mathbb{C}^{n_k\times r}, \quad \mathcal{D}\in\mathbb{C}^{r\times \cdots \times r},
\label{eq:CPdecomposition}
\end{equation} 
where the only nonzero entries of $\mathcal{D}$ are $\mathcal{D}_{i,\ldots,i}$ for $1\leq i\leq r$. If $\mathcal{D}$ is omitted in this bracket notation, then by convention all the nonzero entries of $\mathcal{D}$ are 1. This tensor decomposition can be stored using $\red{\cpsto \le r+r \sum_{k=1}^d n_k}$ \red{degrees of freedom}, but the decomposition is NP-hard to compute for worst case examples~\cite{haastad1990tensor}. The CP decomposition in~\cref{eq:CPdecomposition} is similar to the orthogonal Tucker decomposition with two important differences: (1) The matrices $A^{(1)},\ldots,A^{(d)}$ in~\cref{eq:CPdecomposition} do not need to have orthogonal columns and (2) The core tensor $\mathcal{D}$ must be diagonal. This means that~\cref{eq:CPdecomposition} is equivalent to expressing a tensor as a sum of rank-1 tensors.   

Since we are aiming for upper bounds on the rank of a tensor \red{to bound compressibility of a tensor in CP format}, we can take any decomposition of the form in~\cref{eq:CPdecomposition} with a potentially large $r$, and see if its factor matrices $A^{(1)},\dots,A^{(d)}$ are themselves low rank. For example, we find that~\cite[Lem.~1]{kruskal1988rank}:\footnote{Lemma 1 of~\cite{kruskal1988rank} shows that the dimension of the vector space that spans the slices in the $\nu$th index is equal to the rank of $\mathcal{X}$. The inequality in~\cref{eq:Kruskal} follows from the extra assumption that the slices are themselves low rank tensors.}

\begin{equation}
\rank(\mathcal{X}) \leq \min_{1 \le j \le d} \frac{1}{r_j} \prod_{i=1}^d r_i,
\label{eq:Kruskal} 
\end{equation} 
where $r_i={\rm rank}(A^{(i)})$ for $1 \le i \le d$.  The bound in~\cref{eq:Kruskal} is useful because it allows one to derive upper bounds on the rank of a tensor via bounds on the rank of factor matrices.

\section{Tensors derived by sampling smooth functions}
\red{One often finds that tensors derived from sampling smooth functions are compressible, and we make this observation precise. Tensors derived from sampling multivariate functions have been considered in~\cite{khoromskij2010fast,ibragimov2009three} and analogous results for matrices are available in the literature~\cite{reade1983eigenvalues,townsend2014computing}.}

\subsection{Tensors constructed via sampling algebraically structured functions} \label{sec:algebraic}
In practice, one often encounter tensors that are sampled from multivariate functions. For example, one can take a continuous function of three variables, $f(x,y,z)$, and sample $f$ on a tensor grid to obtain a tensor: 
\[
\mathcal{X}_{ijk} = f(x_i,y_j,z_k), \qquad 1\leq i,j,k\leq n, 
\]
where $\{x_1,\ldots,x_n\}$, $\{y_1,\ldots,y_n\}$, and $\{z_1,\ldots,z_n\}$ are sets of points. 


\subsubsection{Polynomials and algebraic structure}
One common scenario where it is easy to spot \red{compressible tensors} is when the tensor is sampled from a polynomial. To be specific, if a tensor $\mathcal{X}$ is derived by sampling a multivariate polynomial $p(x_1,\dots,x_d)$ of degree at most $N_j-1$ in the variable $x_j$ from a tensor-product grid, \red{then one finds that $\mathcal{X}$ is highly compressible.}

\begin{lemma}
Let $p(x_1,\ldots,x_d)$ be a polynomial of degree at most $N_j-1$ in the variable $x_j$ for $1 \le j \le d$, and let $\mathcal{X} \in \C^{n_1 \times \dots \times n_d}$ be the tensor constructed by sampling $p$, i.e.,
\[ 
\mathcal{X}_{i_1,\dots,i_d} = p(x_{i_1}^{(1)},\dots,x_{i_d}^{(d)}), \qquad 1\leq i_j \leq n_j, 
\qquad 1 \le j \le d,
\]
where $x^{(1)},\dots,x^{(d)}$ are sets of $n_1,\dots,n_d$ nodes, respectively. Then,
\begin{itemize}[leftmargin=*,noitemsep]
\item \red{$\ttsto(\mathcal{X}) \le \sum_{k=1}^d t_{k-1}t_k n_k$}, {\rm where} $t_k=\min\{\prod_{j=1}^k N_j,\prod_{j=k+1}^d N_j\}$,
\item \red{$\mlsto(\mathcal{X}) \le \sum_{k=1}^{d} n_k N_k + \prod_{k=1}^{d} N_k$}, and
\item \red{$\cpsto(\mathcal{X}) \le r+r \sum_{k=1}^d n_k$, where} $r =\min_{1 \le k \le d} \frac{1}{N_k} \prod_{j=1}^d N_j$.
\end{itemize}
Here, the tensor-train decomposition is constructed in the order of $x_1,\dots,x_d$.
\label{lem:polyRanks} 
\end{lemma}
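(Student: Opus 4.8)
The plan is to extract all three estimates from a single observation: a polynomial with coordinatewise degree at most $N_j-1$ in $x_j$ lives in the finite-dimensional tensor-product space spanned by $\{\phi_{j_1}^{(1)}\cdots\phi_{j_d}^{(d)} : 1 \le j_k \le N_k\}$ for any fixed families of univariate polynomials $\phi_j^{(k)}$ (for instance, monomials). Writing $p = \sum_{j_1,\dots,j_d} \mathcal{C}_{j_1,\dots,j_d}\,\phi_{j_1}^{(1)}\cdots\phi_{j_d}^{(d)}$ and sampling on the grid gives
\[
\mathcal{X} = \llbracket \mathcal{C}; V^{(1)},\dots,V^{(d)}\rrbracket, \qquad V^{(k)} \in \C^{n_k \times N_k}, \quad V^{(k)}_{i,j} = \phi_j^{(k)}(x_i^{(k)}),
\]
with coefficient tensor $\mathcal{C} \in \C^{N_1 \times \cdots \times N_d}$. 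From here each format is read off from this factored form, so the argument is organizational rather than hard; the only points needing care are that the reported quantities are upper bounds (they remain valid even when some $n_k < N_k$, in which case the true ranks are smaller) and, for the tensor-train part, that the elimination order is the one fixed in the statement.

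For the tensor-train bound I would reshape the identity above. The $k$th unfolding $X_k$ factors as the product of a matrix with $\prod_{s=1}^k N_s$ columns (a Kronecker-type product of $V^{(1)},\dots,V^{(k)}$), the reshaped core $C_k = \mathrm{reshape}(\mathcal{C}, \prod_{s=1}^k N_s, \prod_{s=k+1}^d N_s)$, and a matrix with $\prod_{s=k+1}^d N_s$ rows (a Kronecker-type product of $V^{(k+1)},\dots,V^{(d)}$). Hence $\rank(X_k) \le \min\{\prod_{s=1}^k N_s, \prod_{s=k+1}^d N_s\} = t_k$ for $1 \le k \le d-1$. Since the tensor-train decomposition is built in the order $x_1,\dots,x_d$, inequality~\eqref{eq:TT_trivial} gives $s_k \le t_k$; together with $s_0 = s_d = 1 = t_0 = t_d$ this yields $\ttsto(\mathcal{X}) \le \sum_{k=1}^d s_{k-1} s_k n_k \le \sum_{k=1}^d t_{k-1} t_k n_k$.

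For the orthogonal Tucker bound I would take thin QR factorizations $V^{(k)} = Q^{(k)} R^{(k)}$ with $Q^{(k)}$ having orthonormal columns and $m_k := \rank(V^{(k)}) \le N_k$ columns, and absorb the $R^{(k)}$ into the core to obtain an orthogonal Tucker decomposition $\mathcal{X} = \llbracket \mathcal{G}; Q^{(1)},\dots,Q^{(d)}\rrbracket$ of multilinear rank $(m_1,\dots,m_d)$. Because $\sum_k n_k t_k + \prod_k t_k$ is nondecreasing in each $t_k$ and $m_k \le N_k$, we get $\mlsto(\mathcal{X}) \le \sum_{k=1}^d n_k N_k + \prod_{k=1}^d N_k$. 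For the CP bound I would expand the double bracket directly into rank-one terms: reindexing the multi-index $(j_1,\dots,j_d)$ by a single $\ell \in \{1,\dots,\prod_j N_j\}$ turns $\mathcal{X}$ into a CP decomposition $\llbracket \mathcal{D}; A^{(1)},\dots,A^{(d)}\rrbracket$ with diagonal $\mathcal{D}$ carrying the entries of $\mathcal{C}$ and with factor matrix $A^{(k)}$ whose columns are columns of $V^{(k)}$, so $\rank(A^{(k)}) \le N_k$. Applying~\eqref{eq:Kruskal} gives $\rank(\mathcal{X}) \le \min_{1 \le k \le d} \frac{1}{N_k}\prod_{j=1}^d N_j = r$, whence $\cpsto(\mathcal{X}) \le r + r\sum_{k=1}^d n_k$.

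I do not expect a genuine obstacle. If anything, the fiddly step is bookkeeping the Kronecker-product orderings in the unfolding, which depend on the reshape convention; but since only the \emph{number} of rows and columns of the outer factors enters the rank bound, the ordering is irrelevant. The one substantive modelling choice, already recorded in the statement, is the fixed elimination order $x_1,\dots,x_d$, which is precisely what makes~\eqref{eq:TT_trivial} — and hence the tensor-train estimate — applicable.
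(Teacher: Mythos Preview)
Your proposal is correct and follows essentially the same line as the paper's proof: both expand $p$ in a monomial/tensor-product basis and read off the rank of each unfolding or matricization from the resulting factorization. The one minor variation is your CP step, where you start from a CP decomposition with $\prod_j N_j$ terms and invoke~\eqref{eq:Kruskal}, whereas the paper directly writes down a CP decomposition with $\prod_{j\neq k} N_j$ terms by collecting the powers of $x_k$ into the coefficient functions; both routes yield the same bound.
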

\begin{proof}
According to the degree assumptions on $p$, we can write $p$ as 
\[
p(x_1,\dots,x_d)=\sum_{q_1=0}^{N_1-1} \cdots \sum_{q_k=0}^{N_k-1} a_{q_1,\dots,q_k}(x_{k+1},\dots,x_{d}) x_1^{q_1} \cdots x_k^{q_k}, \qquad 1 \le k \le d,
\]
where $a_{q_1,\dots,q_k}(x_{k+1},\dots,x_{d})$ is a polynomial in the variables $x_{k+1},\dots,x_{d}$ and for $k+1\le j \le d$, $x_j$ has degree at most $N_j-1$. After sampling, this means that ${\rm rank}(X_k) \le \min\{\prod_{j=1}^k N_j,\prod_{j=k+1}^d N_j\}$ and the bound on \red{$\ttsto(\mathcal{X})$} follows.

Another way to write $p$ is
\[ 
p(x_1,\dots,x_d)=\sum_{j=0}^{N_k-1} b_j(x_1,\dots,x_{k-1},x_{k+1},\dots,x_d)x_k^j,\qquad 1 \le k \le d,
\] 
where $b_j$ is a polynomial in $x_1,\dots,x_{k-1},x_{k+1},\dots,x_d$ of degree at most $N_j-1$ in $x_j$. After sampling, this shows that $\rank(X_{(k)}) \le N_k$ and the bound on \red{$\mlsto(\mathcal{X})$} follows.

Finally, separating out $x_d$, we can also write $p$ as
\begin{equation} 
p(x_1,\dots,x_d)=\sum_{q_1=0}^{N_1-1} \cdots \sum_{q_{d-1}=0}^{N_{d-1}-1} c_{q_1,\dots,q_{d-1}}(x_{d})x_1^{q_1} \cdots x_{d-1}^{q_{d-1}}, 
\label{eq:CPpoly} 
\end{equation} 
where each term in~\cref{eq:CPpoly} is a rank 1 tensor after sampling. We can do this to each variable and thus $\rank(\mathcal{X}) \le \min_{1 \le k \le d} \frac{1}{N_k} \prod_{j=1}^d N_j$. \red{The bound on $\cpsto(\mathcal{X})$ follows.}
\end{proof}

A special case of~\cref{lem:polyRanks} is when the polynomial $p$ has maximal degree of at most $N-1$ so that $N_1=\cdots=N_d=N$.\footnote{We say that a polynomial $p_N(x_1,\ldots,x_d)$ has maximal degree $\leq N$ if $p_N$ is a polynomial of degree at most $N$ in all the variables $x_i$.} \red{We find that}
\begin{itemize}[leftmargin=*,noitemsep]
\item \red{$\ttsto(\mathcal{X}) \le \sum_{k=1}^d N^{2t-1} n_k$, where $t = \min\{k,d-k\}$},
\item \red{$\mlsto(\mathcal{X}) \le N\sum_{k=1}^d n_k + N^d$}, and
\item \red{$\cpsto(\mathcal{X}) \le N^{d-1}\sum_{k=1}^d n_k + N^{d-1}$.}
\end{itemize}
\red{The important observation is that tensors constructed by sampling polynomials on a grid are highly compressible.}

\subsubsection{Other special cases of algebraic structure}
Similar to multivariate polynomials, it is easy to spot---after some experience---the mathematical tensor ranks of tensors constructed by sampling functions that have explicit algebraic structure since each variable in the function can be thought as a fiber of the tensor. The easiest ones to spot are those tensors derived from functions that are the sums of products of single-variable functions, such as
\[ 
f(x,y,z)=1+\tan(x)y+y^2z^3, \quad g(x,y,z,w)=\cos(x)\sin(y)+e^{10z}e^{100w }. 
\]
If $\mathcal{F}$ and $\mathcal{G}$ are tensors constructed by sampling $f$ and $g$ on a \red{$n \times n \times n$ and $n \times n \times n \times n$} tensor-product grid, respectively, then the \red{storage costs in different formats} are bounded by 
\[ 
\begin{aligned} 
&\ttsto(\mathcal{F}) \le 8n, \quad &\mlsto(\mathcal{F}) &\le 7n+12, \quad &\cpsto(\mathcal{F}) \le 9n+3, \\
&\ttsto(\mathcal{G}) \le12n, \quad &\mlsto(\mathcal{G}) &\le 8n+16, \quad &\cpsto(\mathcal{G}) \le 8n+2,
\end{aligned} 
\]
where the tensor-train decompositions are performed in the order $x,y,z,w$. Other examples are functions that can be expressed with exponentials and powers, \red{and similar examples have also been considered~\cite{oseledets2013,khoromskij2011dlog}}. 

Some special functions require reorganizations to reveal their algebraic structures. If the function is expressed with trigonometric functions, then the sampled tensor can often be low rank due to trigonometric identities. For example, \red{consider the function $f(x,y,z)=\cos(x+y+z)$ that is a special case of the examples in~\cite{oseledets2009breaking,beylkin2002numerical}}. Since it can be written as 
\[ 
f(x,y,z) = (\cos (x) \!\cos (y) - \sin (x)\! \sin (y) )\!  \cos (z) - (\sin (x)\!\cos (y) + \cos (x)\!\sin (y)) \!\sin (z), 
\]
any tensor $\mathcal{F}$ constructed by sampling $f$ on a \red{$n \times n \times n$} tensor-product grid satisfies
\[ 
\ttsto(\mathcal{F}) \le 8n, \quad \mlsto(\mathcal{F}) \le 6n+8, \quad \cpsto(\mathcal{F}) \le 12n+4. 
\]
These examples can often be combined to build more complicated functions that result in \red{compressible} tensors. This is an $ad \ hoc$ process and requires human ingenuity to express the sampled function in a revealing form. Again, tensors constructed by sampling such algebraically structured functions on a sufficiently large tensor-product grid can be represented using a small number of degrees of freedom.

\subsection{Tensors derived by sampling smooth functions}
\label{sec:smoothness}  
Although most functions do not have the algebraic structure specified in~\cref{sec:algebraic}, tensors that are constructed by sampling smooth functions are often well approximated by \red{compressible} tensors. In light of~\cref{lem:polyRanks}, our idea to understand the \red{compressibility} of a tensor derived from sampling a function is first to approximate that function by a multivariate polynomial, which is already a routine procedure for computing with low rank approximations to multivariate functions~\cite{hashemi2017chebfun}. 

Without loss of generality, suppose that $\mathcal{X}$ is formed by sampling a smooth function $f$ on a tensor-product grid in $[-1,1]^d$, i.e., 
\begin{equation} 
\mathcal{X}_{i_1,\ldots,i_d} = f(x_{i_1}^{(1)},\dots,x_{i_d}^{(d)}),\qquad \red{1\leq i_k \leq n_k, \qquad 1 \le k \le d},
\label{eq:sampling} 
\end{equation} 
where $x^{(1)},\dots,x^{(d)}$ are sets of $n_1,\dots,n_d$ nodes in $[-1,1]$. Our idea is to find a multivariate polynomial $p$ of degree $\leq N_j-1$ in the variable $x_j$ that approximates $f$ in $[-1,1]^d$ and then set $\mathcal{Y}_{i_1,\ldots,i_d} = p(x_{i_1}^{(1)},\dots,x_{i_d}^{(d)})$. By~\cref{lem:polyRanks}, $\mathcal{Y}$ can be represented with a small number of degrees of freedom and $\mathcal{Y}$ is an approximation to $\mathcal{X}$. In particular, we have
\begin{equation} 
\|\mathcal{X}-\mathcal{Y}\|_F \leq\! \left(\prod_{i=1}^d n_i\right)^{\!\!\frac{1}{2}}\! \|\mathcal{X}-\mathcal{Y}\|_{{\rm max}} \leq \!\left(\prod_{i=1}^d n_i\right)^{\!\!\frac{1}{2}}\! \|f-p_N\|_\infty,
\label{eq:functionBound} 
\end{equation} 
where $\|\cdot\|_\infty$ denotes the supremum norm on $[-1,1]^d$ and $\|\cdot\|_{\rm max}$ is the absolute maximum entry norm. Therefore, if $p$ is a good approximation to $f$, then $\mathcal{Y}$ is a good approximation to $\mathcal{X}$ too. \red{Although the error bound is good for small $d$, this approximation still suffers from the curse of dimensionality for large $d$.} 

One can now propose any linear or nonlinear approximation scheme to find a polynomial approximation $p$ of $f$ on $[-1,1]^d$. Clearly, excellent bounds on $\|\mathcal{X}-\mathcal{Y}\|_F$ are obtained by finding a $p$ so that 
\[
\| f - p\|_\infty \approx \inf_{q\in \mathcal{P}_{N_1,\dots,N_d}} \| f - q \|_\infty,
\]
where $\mathcal{P}_{N_1,\dots,N_d}$ is the space of $d$-variate polynomials of maximal degree $\le N_i-1$ in $x_i$ for $1 \le i \le d$. This best multivariable polynomial problem is often, but not always, tricky to solve directly. \red{In those cases, near-optimal polynomial approximations are used instead. One common choice is to use} $p$ as the multivariate Chebyshev projection of $f$. That is, 
\[
\begin{aligned} 
p_{N_1,\dots,N_d}^{\rm cheb}(x_1,\ldots,x_d) &= \sum_{i_1=0}^{N_1-1}\!\!{}^{'}\cdots \sum_{i_d=0}^{N_d-1}\!\!{}^{'} c_{i_1,\ldots,i_d} T_{i_1}(x_1)\cdots T_{i_d}(x_d), \\
c_{i_1,\ldots,i_d} &= \left(\frac{2}{\pi}\right)^d\!\!\int_{-1}^1 \cdots \int_{-1}^1 \frac{f(x_1,\ldots,x_d)T_{i_1}(x_1)\cdots T_{i_d}(x_d)}{\sqrt{1-x_1^2}\cdots \sqrt{1-x_d^2}}dx_1\cdots dx_d,
\end{aligned} 
\]
where the primes indicate that the first term in the sum is halved and $T_k(x)$ is the Chebyshev polynomial of degree $k$. Importantly, $p_{N_1,\dots,N_d}^{\rm cheb}$ is a near-best polynomial approximation to $f$~\cite{trefethen2013approximation}, and the error $\smash{\| f - p_{N_1,\dots,N_d}^{\rm cheb}\|_\infty}$ can be bounded.  Thus, this choice of $p$ leads to bounds on the compressibility of $\mathcal{X}$.

\subsection{Worked examples} 
\label{sec:result}
Here, we give two examples that illustrate how to understand the compressibility of tensors constructed by sampling smooth functions. We consider two functions: (1) A Fourier-like function, where we use best polynomial approximation, and (2) A sum of Gaussian bumps, where we use Chebyshev approximation.

\subsubsection{Fourier-like function} 
Consider a tensor $\mathcal{X} \in \C^{n \times n \times n}$ constructed by sampling the following Fourier-like function on a tensor-product grid~\cite{townsend2014computing}:
\[ 
f(x,y,z)=e^{iM\pi xyz}, \qquad x,y,z \in [-1,1], 
\]
where $M \geq 1$ is a real parameter. While \red{representing $\mathcal{X}$ exactly requires $n^3$ degrees of freedom, it can be approximated by tensors that require fewer degrees of freedom (in the tensor-train and Tucker formats)}. To see this, let $p^{{\rm best}}_{k-1}$ and $q^{{\rm best}}_{k-1}$ be the best minimax polynomial approximations of degree $\leq k-1$ to $\cos(M\pi t)$ and $\sin(M\pi t)$ on $[-1, 1]$, respectively, and define $h_{k-1} = p^{{\rm best}}_{k-1}+iq^{{\rm best}}_{k-1}$. Note that $h_{k-1}(xyz)$ has maximal degree at most $k-1$ so that
\[ 
\begin{aligned} 
\inf_{w_{k-1}\in\mathcal{P}_{k-1}} \sup_{x,y,z\in[-1,1]}\left|e^{iM\pi xyz} - w_{k-1}(x,y,z)\right| &\leq \sup_{x,y,z\in[-1,1]}\left|e^{iM\pi xyz} - h_{k-1}(xyz)\right| \\
&= \sup_{t\in[-1,1]}\left|e^{iM\pi t} - h_{k-1}(t)\right|, 
\end{aligned} 
\]
where $\mathcal{P}_{k-1}$ is the space of trivariate polynomials of maximal degree $\leq k-1$ and the equality follows since $t = xyz \in [-1, 1]$ if $x,y,z\in[-1,1]$. Furthermore, we have $e^{iM\pi t} = \cos(M\pi t) + i \sin(M\pi t)$ and so
\[
\sup_{t\in[-1,1]}\!\left| e^{iM\pi t} - h_{k-1}(t)\right| \leq \!\sup_{t\in[-1,1]}\!\left|\cos(M\pi t) - p^{{\rm best}}_{k-1}(t)\right|+\!\sup_{t\in[-1,1]}\!\left|\sin(M\pi t) - q^{{\rm best}}_{k-1}(t)\right|. \]
By the equioscillation theorem~\cite[Thm.~7.4]{powell1981approximation}, $p^{{\rm best}}_{k-1} = 0$ for $k-1 \le 2\floor{M} - 1$ since $\cos(M\pi t)$ equioscillates $2\floor{M} + 1$ times in $[-1, 1]$. Similarly, $\sin(M\pi t)$ equioscillates $2\floor{M}$ times in $[-1, 1]$ and hence, $q^{{\rm best}}_{k-1}=0$ for $k-1 \le 2\floor{M}-2$. However, for $k > 2\floor{M}$, $\sup_{t\in[-1,1]}\!\left| e^{iM\pi t} - h_{k-1}(t)\right|$ decays super-geometrically to zero as $k \rightarrow \infty$. \red{This also indicates that the error between the tensors sampled from $e^{iM\pi xyz}$ and $h_{k-1}(x,y,z)$ rapidly goes to 0 as $k \rightarrow \infty$.} Hence, the numerical maximal degree, $N_\epsilon$, of $e^{iM\pi xyz}$ satisfies $N_\epsilon/2M \rightarrow c$ for some constant $c \geq 1$ as $M \rightarrow \infty$.~\Cref{lem:polyRanks} shows \red{that an approximant to $\mathcal{X}$ only requires $\mathcal{O}(M)$ degrees of freedom.} \red{In particular, if $s_1$ is the second element of the tensor-train rank of an approximant tensor to the one sampled by $e^{iM\pi xyz}$}, then $s_1/(2M)\rightarrow 1$ as $M\rightarrow \infty$.  

Figure~\ref{fig:smooth_ex} (left) plots the ratio of the second element of the tensor-train rank, $s_1$, of a tensor sampled from the Fourier-like function and $2M$. We observe that $s_1/(2M) \rightarrow 1$ as $M\rightarrow \infty$.  

\begin{figure} 
\begin{minipage}{0.49\textwidth}
\begin{overpic}[width=\textwidth]{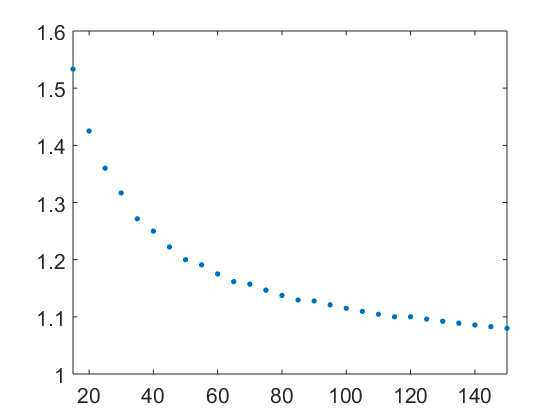}
\put(50,-2) {$M$}
\put(0,35) {\rotatebox{90}{ratio}}
\put(42,72) {$s_1/(2M)$}
\end{overpic} 
\end{minipage}
\begin{minipage}{0.49\textwidth}
\begin{overpic}[width=\textwidth]{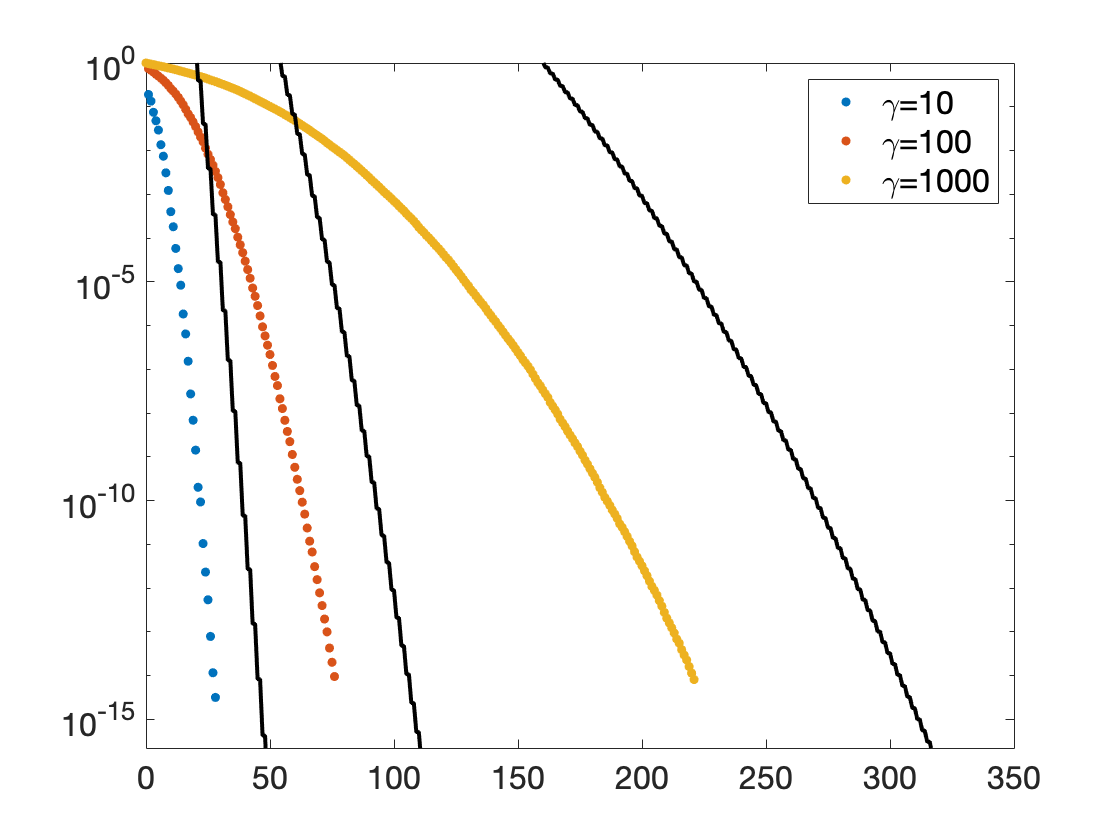}
\put(43,-2) {TT-rank}
\put(0,28) {\rotatebox{90}{Accuracy}}
\put(66,30) {\rotatebox{-66}{$\gamma=1000$}}
\put(35,30) {\rotatebox{-78}{$\gamma=100$}}
\put(23,26) {\rotatebox{-85}{$\gamma=10$}}
\end{overpic} 
\end{minipage}
\caption{Left: The ratio of the second element of the tensor-train rank, $s_1$, of the tensors of size $n\times n\times n$ with $n = 600$ constructed by sampling the Fourier-like function $e^{iMxyz}$ with $15 \leq M \leq 150$. The accuracy used to calculate the tensor-train ranks is $10^{-10}$. Right: The \red{second element, $s_1$, of the} tensor-train rank (blue, red, and yellow dots) \red{calculated with the TT-SVD} and the theoretical bounds (black lines) of $\mathcal{X} \in \C^{400 \times 400 \times 400}$ constructed by sampling $\smash{\sum_{j=1}^{300} e^{-\gamma((x-x_j)^2+(y-y^j)^2+(z-z_j)^2)}}$ on an equispaced tensor-product grid for $\gamma=10,100,1000$, where $(x_j,y_j,z_j)$ are arbitrary centers in $[-1,1]^3$.}
\label{fig:smooth_ex} 
\end{figure}

\subsubsection{A sum of Gaussian bumps}\label{ex:gauss_bumps}
Consider a tensor $\mathcal{X} \in \C^{n \times n \times n}$ constructed by sampling a sum of $M$ Gaussian bumps, centered at arbitrary locations $(x_1,y_1,z_1),\dots,(x_M,y_M,z_M)$ in $[-1,1]^3$, i.e.,
\begin{equation} 
f(x,y,z)=\sum_{j=1}^M e^{-\gamma((x-x_j)^2+(y-y_j)^2+(z-z_j)^2)}, \qquad \gamma>0. 
\label{eq:Bumps} 
\end{equation} 
Each Gaussian bump is a separable function of three variables so, mathematically, the tensor ranks of $\mathcal{X}$ depend linearly on $M$. However, since the sum is a smooth function, the ranks are related to the polynomial degree required to approximate $f(x,y,z)$ in $[-1,1]^3$ to an accuracy of $0<\epsilon<1$. Hence, the tensor ranks of $X$ depend on $\gamma$ and have very mild growth 
in $M$ in the storage costs. 

Due to the symmetry in $x$, $y$, and $z$ as well as separability of each term in~\cref{eq:Bumps}, we find that the Chebyshev approximation to $f(x,y,z)$ can be bounded by
\[
\sup_{x,y,z\in[-1,1]} \left|f(x,y,z) - \sum_{j=1}^M p_{\ell}^j(x)q_{\ell}^j(y)r_{\ell}^j(z)\right|\leq 3M \!\sup_{x\in[-1,1]} \left| e^{-\gamma x^2} - h_\ell(x)\right|,
\]
where $p_{\ell}^j$, $q_\ell^j$, $r_\ell^j$, and $h_\ell$ are Chebyshev approximations of degree $\leq \ell$ to $\smash{e^{-\gamma(x-x_j)^2}}$, $\smash{e^{-\gamma(y-y_j)^2}}$, $\smash{e^{-\gamma(z-z_j)^2}}$, and $\smash{e^{-\gamma x^2}}$, respectively. An explicit Chebyshev expansion for $\smash{e^{-\gamma x^2}}$ is known and given by~\cite[p.~32]{luke1969special}
\[
e^{-\gamma x^2} = \sum_{j=0}^\infty{}^{'} (-1)^j e^{-\gamma/2} I_j(\gamma/2) T_{2j}(x),  
\]
where the prime on the summation indicates that the first term is halved, and $I_j(z)$ is the modified Bessel function of the first kind with parameter $j$~\cite[(10.25.2)]{olver2010nist}. This means that one can show that~\cite[Lem.~5]{gardner2018gpytorch}:\footnote{Unfortunately, there is a typo in~\cite[Lem.~5]{gardner2018gpytorch} and $I_{\ell+1}(\gamma/4)$ should be replaced by $I_{\floor{\ell/2}+1}(\gamma/4)$.}

\[ h_\ell(x) = \!\sum_{j=0}^{\ell}\!{}^{'} (-1)^j e^{-\gamma/2} I_j(\gamma/2) T_{2j}(x), \ \sup_{x\in[-1,1]}\!\left|e^{-\gamma x^2}-h_\ell(x)\right| \leq 2e^{-\gamma /4} I_{\floor{\ell/2}+1}(\gamma/4). 
\]

By~\cref{lem:polyRanks} and~\cref{eq:functionBound}, \red{we can understand the compressibility of $\mathcal{X}$. In particular, we can find an approximant tensor whose tensor-train ranks are} bounded by the smallest integer $\ell$ such that $6Mn^{3/2}e^{-\gamma /4} I_{\floor{\ell/2}+1}(\gamma/4) \le \epsilon$.  \red{We find it straightforward to visualize compressibility via elements of the tensor ranks and their bounds, due to the way storage costs are calculated.} Figure~\ref{fig:smooth_ex} (right) shows the second element of the tensor-train rank, $s_1$ of the approximant tensor, along with the bound that we derived. The bounds are relatively tight when $\epsilon$ is small.

\section{Tensors with displacement structure} \label{sec:displacement}
We say that $\mathcal{X}\in\mathbb{C}^{n_1\times\cdots\times n_d}$ has an $(A_1,\ldots,A_d)$-displacement structure of $\mathcal{G}\in\mathbb{C}^{n_1\times\cdots\times n_d}$ if $\mathcal{X}$ satisfies the multidimensional Sylvester equation
\begin{equation} \label{eq:tensor_disp}
\mathcal{X} \times_1 A^{(1)} + \cdots + \mathcal{X} \times_d A^{(d)} = \mathcal{G}, \qquad A^{(k)} \in \C^{n_k \times n_k},
\end{equation}
where `$\times_k$' is the $k$-mode matrix product of a tensor. In this section, we show that when $A_1,\ldots,A_d$ are normal matrices with ``separated" spectra and $\mathcal{G}$ is a low rank tensor, then $\mathcal{X}$ \red{is compressible}. Several classes of structured tensors (e.g., the Hilbert tensor) and the solution tensors of certain discretized partial differential equations (e.g., the discretized solution to Poisson's equation) have a displacement structure, which leads to an understanding of their compressibility.

\subsection{Zolotarev numbers} \label{sec:zolo}
The bounds that we derive on compressibility of tensors involve so-called Zolotarev numbers~\cite{akhiezer1990elements,gonvcar1969zolotarev,zolotarev1877application}. A Zolotarev number is a positive number between $0$ and $1$ defined via an infimum problem involving rational functions~\cite{zolotarev1877application}. Namely,
\begin{equation} \label{eq:zol_def}
Z_k(E,F) := \inf_{r \in \mathcal{R}_{k,k}} \frac{\sup_{z \in E} |r(z)|}{\inf_{z \in F} |r(z)|},\qquad k\geq 0,
\end{equation}
where $E$ and $F$ are disjoint complex sets and $\mathcal{R}_{k,k}$ is the set of irreducible rational functions of the form $p(x)/q(x)$ with polynomials $p$ and $q$ of degree at most $k$. If $E$ and $F$ are well-separated, then one finds that $Z_k(E,F)$ decays rapidly with $k$. This is because one can construct a low degree rational function that is small on $E$ and large on $F$. If $E$ and $F$ are close to each other, then typically $Z_k(E,F)$ decreases much more slowly with $k$. 

Zolotarev numbers can be used to bound the singular values of matrices with displacement structure~\cite[Thm.~2.1]{beckermann2017singular}. In particular, if $X \in \C^{m \times n}$ with $m \ge n$ satisfies the displacement structure
\begin{equation} 
AX-XB=MN^*, \qquad M \in \C^{m \times \nu}, \quad N \in \C^{n \times \nu}, 
\label{eq:displacementStructure}
\end{equation} 
where $A\in\mathbb{C}^{m\times m}$ and $B\in\mathbb{C}^{n\times n}$ are normal matrices with spectra $\Lambda(A) \subseteq E$ and $\Lambda(B) \subseteq F$, then the singular values of $X$ satisfy~\cite[Thm.~2.1]{beckermann2017singular}
\begin{equation} 
\sigma_{j+\nu k}(X) \le Z_k(E,F)\sigma_j(X), \quad 1 \le j+\nu k \le n.
\label{eq:SVzolotarev}
\end{equation} 
Roughly speaking, if $\Lambda(A)$ and $\Lambda(B)$ are well-separated and $\nu$ is small, then the singular values $\sigma_j(X)$ decrease rapidly to $0$. 

When working with tensors, we translate the inequalities in~\cref{eq:SVzolotarev} into Frobenius norm error bounds \red{so that matrix results can then be utilized}.

\begin{lemma} 
If $X\in\mathbb{C}^{m\times n}$ is a matrix satisfying~\cref{eq:displacementStructure} and $X_{\nu k}$ is the best rank $\nu k$ approximation to $X$, then 
\[
\|X - X_{\nu k} \|_F \leq Z_k(E,F) \|X\|_F,
\]
where $\|\cdot \|_F$ denotes the matrix Frobenius norm. 
\label{lem:FrobNorm} 
\end{lemma}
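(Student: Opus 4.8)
The plan is to combine the singular value inequality~\cref{eq:SVzolotarev} with the Eckart--Young--Mirsky theorem. Recall that the best rank-$r$ approximation $X_r$ to $X \in \mathbb{C}^{m \times n}$ in the Frobenius norm is obtained by truncating a singular value decomposition of $X$ after its $r$th term, and the resulting error satisfies $\|X - X_r\|_F^2 = \sum_{j=r+1}^{n} \sigma_j(X)^2$ (here we use, as in~\cref{eq:displacementStructure}, the convention $m \ge n$, so that $X$ has at most $n$ nonzero singular values; the case $m < n$ reduces to this one by transposing, which preserves both the displacement structure and the Frobenius norm). So first I would set $r = \nu k$ and rewrite $\|X - X_{\nu k}\|_F^2 = \sum_{j=\nu k + 1}^{n} \sigma_j(X)^2 = \sum_{i=1}^{n-\nu k} \sigma_{i+\nu k}(X)^2$.

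Next I would invoke~\cref{eq:SVzolotarev}: since $X$ satisfies~\cref{eq:displacementStructure} with $A,B$ normal and $\Lambda(A) \subseteq E$, $\Lambda(B) \subseteq F$, we have $\sigma_{i + \nu k}(X) \le Z_k(E,F)\,\sigma_i(X)$ for every index $i$ with $1 \le i + \nu k \le n$, i.e.\ for $1 \le i \le n - \nu k$. Substituting termwise into the sum above gives $\|X - X_{\nu k}\|_F^2 \le Z_k(E,F)^2 \sum_{i=1}^{n - \nu k} \sigma_i(X)^2 \le Z_k(E,F)^2 \sum_{i=1}^{n} \sigma_i(X)^2 = Z_k(E,F)^2 \|X\|_F^2$, where the middle inequality only adds nonnegative terms; taking square roots finishes the argument. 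If instead $\nu k \ge n$, then $X_{\nu k} = X$ and the claimed bound holds trivially since $Z_k(E,F) \ge 0$.

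The argument is essentially a bookkeeping exercise, so I do not expect a serious obstacle; the points that require care are (i) aligning the index shift $i \mapsto i + \nu k$ with the validity range $1 \le j + \nu k \le n$ of~\cref{eq:SVzolotarev}, (ii) citing the correct form of the Eckart--Young--Mirsky error formula under the $m \ge n$ convention, and (iii) handling the degenerate case $\nu k \ge n$. It is also worth recalling that $Z_k(E,F) \le 1$ whenever $E$ and $F$ are disjoint, so the lemma indeed yields a genuine contraction consistent with $\|X - X_{\nu k}\|_F \le \|X\|_F$, and the usefulness of the bound comes precisely from the rapid decay of $Z_k(E,F)$ in $k$ when $E$ and $F$ are well separated.
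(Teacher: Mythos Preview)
Your proof is correct and, in fact, more direct than the paper's. The paper partitions the tail $\sum_{j>r}\sigma_j^2$ (with $r=\nu k$) into consecutive blocks of length $r$, applies~\cref{eq:SVzolotarev} repeatedly to bound the $s$th block by $Z_k^{2s}\sum_{j=1}^r\sigma_j^2$, sums the resulting geometric series to get $\|X-X_r\|_F^2\le \tfrac{Z_k^2}{1-Z_k^2}\sum_{j=1}^r\sigma_j^2$, and then rearranges using $\|X\|_F^2=\sum_{j=1}^r\sigma_j^2+\|X-X_r\|_F^2$. Your argument instead applies~\cref{eq:SVzolotarev} once, shifting the whole tail by $\nu k$ in a single step, and bounds $\sum_{i=1}^{n-\nu k}\sigma_i^2$ trivially by $\|X\|_F^2$. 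This is shorter, avoids the geometric series entirely, and does not implicitly rely on $Z_k<1$ for convergence (the paper has to treat $k=0$ separately for precisely this reason). Both routes land on the same inequality; yours simply gets there with less machinery.
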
 
\begin{proof} 
To simplify notation let $Z_k=Z_k(E,F)$, $r=\nu k$, $\sigma_j = \sigma_j(X)$ for $1\leq j\leq n$, and $\sigma_j = 0$ for $j>n$. \red{If $k = 0$, then $r = 0$ and $Z_k = 1$, so $X_r = 0$ and the statement follows automatically. Now consider $k > 0$}, note that for any $s\geq 1$ we have
\[
\sum_{j=sr+1}^{(s+1)r} \sigma_j^2 = \sum_{j=1}^{r} \sigma_{j+sr}^2 \leq Z_k^2 \sum_{j=1}^{r} \sigma_{j+(s-1)r}^2 \leq \cdots \leq Z_k^{2s} \sum_{j=1}^{r} \sigma_{j}^2,
\]
where the inequalities come from the repeated application of the bound in~\cref{eq:SVzolotarev}. 
Therefore, we can bound $\|X-X_r\|_F^2$ by partitioning the singular values into groups of $r$. That is, 
\[
\|X-X_r\|_F^2 = \sum_{j=r+1}^n \sigma_i^2 \leq \sum_{s=1}^{\infty} \sum_{j=sr+1}^{(s+1)r} \sigma_i^2 \leq \sum_{s=1}^{\infty} Z_k^{2s} \sum_{j=1}^{r} \sigma_{j}^2 = \frac{Z_k^2}{1-Z_k^2}\sum_{j=1}^{r}\sigma_{j}^2,
\]
where the last equality is obtained by summing up the geometric series. Since $ \|X\|_F^2 = \sum_{j=1}^n \sigma_j^2$, we find that 
\[
\left(1 + \frac{Z_k^2}{1-Z_k^2} \right) \|X-X_r\|_F^2\leq \frac{Z_k^2}{1-Z_k^2} \|X\|_F^2.
\]
The result follows by rearranging. 
\end{proof} 

For $0<\epsilon<1$, the numerical rank of $X$ measured in the Frobenius norm is the smallest integer, $r_\epsilon$, such that 
\[
\inf_{{\rm rank} (\tilde{X}) \leq r_\epsilon } \|X - \tilde{X}  \|_F \leq \epsilon \left\|X\right\|_F.
\]
We denote this integer by $\rank_\epsilon(X)$. From~\cref{lem:FrobNorm}, we find that for matrices that satisfy~\cref{eq:displacementStructure}, we have
\begin{equation} \label{eq:zolo_fro}
\rank_\epsilon(X) \le \nu k,
\end{equation}
where $k$ is the smallest integer so that $Z_k(E,F) \le \epsilon$. Therefore, Zolotarev numbers are very useful when trying to bound the numerical rank of matrices with displacement structure. For example, for an $n \times n$ Pick matrix $P_n$ constructed with real numbers from an inverval $[a,b]$ with $0 < a < b < \infty$, one can find that $\rank_\epsilon(P_n) \le \red{2}\Bigl\lceil \log(4b/a)\log(4/\epsilon)/\pi^2 \Bigr\rceil$~\cite{beckermann2017singular}.

\subsection{The compressibility of tensors with displacement structure in the tensor-train format} \label{sec:tt}
Zolotarev numbers can also be used to understand the compressibility of tensors satisfying~\eqref{eq:tensor_disp}. From the bounds in~\cref{eq:TT_trivial}, one finds that the numerical ranks of each unfolding provides an upper bound on \red{all entries of the tensor-train ranks of approximant tensors}. More precisely, if $\mathcal{X} \in \C^{n_1 \times \dots \times n_d}$ is a tensor and $0 < \epsilon < 1$, then there exists a tensor $\tilde{\mathcal{X}}$ such that~\cite[Thm.~2.2]{oseledets2011tensor}
\begin{equation} \label{eq:num_ttrank}
||\mathcal{X}-\tilde{\mathcal{X}}||_F \le \epsilon ||\mathcal{X}||_F, \quad \ttrank(\tilde{\mathcal{X}})=(1,\rank_\delta(X_1),\dots,\rank_\delta(X_{d-1}),1),
\end{equation}
where $\delta=\epsilon/\sqrt{d\red{-1}}$ and $X_k$ is the $k$th unfolding of $\mathcal{X}$. \red{In order to easily relate tensor-train ranks with multilinear ranks in the next subsection, we choose to use $\delta=\epsilon/\sqrt{d}$.}

If $\mathcal{X}$ satisfies~\cref{eq:tensor_disp}, then by rearranging~\cref{eq:tensor_disp} one can show that each unfolding matrix, $X_j$, has a displacement structure. This is precisely $B_jX_j-X_jC_j^T=G_j$, where $G_j$ is the $j$th unfolding of $\mathcal{G}$ and 
\[
\begin{aligned} 
B_j &= I \otimes \cdots \otimes I \otimes A^{(1)}+\cdots+A^{(j)} \otimes I \otimes \cdots \otimes I, \\
C_j &=-(I \otimes \cdots \otimes I \otimes A^{(j+1)}+\cdots+A^{(d)} \otimes I \otimes \cdots \otimes I).
\end{aligned} 
\]
From properties of the Kronecker product~\cite[Thm 2.5]{schacke2004kronecker}, we know that $B_j$ and $C_j$ are normal matrices with $\Lambda(B_j)=\Lambda(A^{(1)})+ \dots +\Lambda(A^{(j)})\subseteq E_j$ and $\Lambda(C_j)=-(\Lambda(A^{(j+1)})+ \dots +\Lambda(A^{(d)}))\subseteq F_j$.\footnote{By $\Lambda(A)+\Lambda(B)$ we mean the Minkowski sum, formed by adding each element in $\Lambda(A)$ to each element in $\Lambda(B)$, i.e., \[ \Lambda(A)+\Lambda(B) = \{ a+b \ | \ a \in \Lambda(A), \ b \in \Lambda(B)\}. \]} From~\cref{lem:FrobNorm} we see that for any integer $k_j$ such that $Z_{k_j}(E_j,F_j)\leq \delta$, then 
\[
{\rm rank}_\delta(X_j) \leq k_j\nu_j, \qquad \nu_j = {\rm rank}(G_j), \qquad 1\leq j\leq d-1.
\]
Therefore, a necessary condition to bound the numerical tensor-train ranks of $\mathcal{X}$ using this approach is that the spectra of $A^{(1)},\ldots,A^{(d)}$ are {\em Minkowski sum separated}. 

\begin{definition} \label{def:minkowski_separated}
We say that normal matrices $A^{(1)},\ldots,A^{(d)}$ are Minkowski sum separated if there are disjoint sets $E_j$ and $F_j$ so that 
\[ 
\Lambda(A^{(1)})+\dots+\Lambda(A^{(j)}) \subseteq E_j, \quad -(\Lambda(A^{(j+1)})+\dots+\Lambda(A^{(d)})) \subseteq F_j, \quad 1 \le j \le d-1, 
\]
where the set additions are Minkowski sums and $\Lambda(A^{(j)})$ denotes the spectrum of $A^{(j)}$.
\end{definition}

\Cref{fig:min_sum} illustrates three Minkowski sum separated matrices $A^{(1)},A^{(2)}$, and $A^{(3)}$ along with possible choices for the sets $E_j$ and $F_j$ for $j = 1,2$. We summarize our findings as a theorem. 
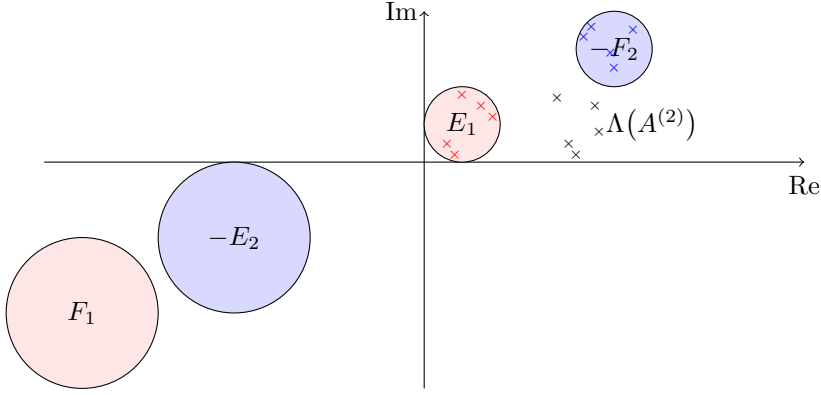
\begin{figure}
\label{fig:min_sum}
\centering
\begin{tikzpicture}
\draw[->] (-5,0)--(5,0);
\filldraw[black] (5,-0.3) node {Re};
\draw[->] (0,-3)--(0,2);
\filldraw[black] (-0.3,2) node {Im};
\filldraw[color=black,fill=pink!40] (0.5,0.5) circle (0.5);
\filldraw[black] (0.5,0.5) node {$E_1$};
\filldraw[red] (0.75,0.75) node {\tiny{$\times$}};
\filldraw[red] (0.9,0.6) node {\tiny{$\times$}};
\filldraw[red] (0.3,0.25) node {\tiny{$\times$}};
\filldraw[red] (0.4,0.1) node{\tiny{$\times$}};
\filldraw[red] (0.5,0.9) node{\tiny{$\times$}};
\filldraw[black] (3,0.5) node {$\Lambda\!\left( A^{(2)} \right)$};
\filldraw[black] (2.25,0.75) node {\tiny{$\times$}};
\filldraw[black] (1.75,0.85) node {\tiny{$\times$}};
\filldraw[black] (2.3,0.4) node {\tiny{$\times$}};
\filldraw[black] (2,0.1) node {\tiny{$\times$}};
\filldraw[black] (1.9,0.25) node {\tiny{$\times$}};
\filldraw[color=black,fill=blue!15] (2.5,1.5) circle (0.5);
\filldraw[black] (2.5,1.5) node {$-F_2$};
\filldraw[blue] (2.75,1.75) node {\tiny{$\times$}};
\filldraw[blue] (2.1,1.66) node {\tiny{$\times$}};
\filldraw[blue] (2.5,1.25) node {\tiny{$\times$}};
\filldraw[blue] (2.2,1.8) node {\tiny{$\times$}};
\filldraw[blue] (2.45,1.45) node {\tiny{$\times$}};
\filldraw[color=black,fill=pink!40] (-4.5,-2) circle (1);
\filldraw[black] (-4.5,-2) node {$F_1$};
\filldraw[color=black,fill=blue!15] (-2.5,-1) circle (1);
\filldraw[black] (-2.5,-1) node {$-E_2$};
\end{tikzpicture}
\caption{Minkowski sum separated matrices $A^{(1)},A^{(2)}$, and $A^{(3)}$ where the colored crosses denote the spectrum of $A^{(1)}$, $A^{(2)}$, and $A^{(3)}$, respectively. Here, $\Lambda(A^{(1)}) \subseteq E_1$, $\Lambda(A^{(3)}) \subseteq -F_2$, $\Lambda(A^{(1)})+\Lambda(A^{(2)}) \subseteq E_2$, and $\Lambda(A^{(2)})+\Lambda(A^{(3)}) \subseteq -F_1$. By definition, we must have that $E_1$ is disjoint from $F_1$ (red regions), and that $E_2$ is disjoint from $F_2$ (blue regions).}
\end{figure}

\begin{theorem} \label{thm:tt_disp}
Suppose $\mathcal{X} \in \C^{n_1 \times \dots \times n_d}$ satisfies~\cref{eq:tensor_disp}, where $A^{(1)},\dots,A^{(d)}$ are Minkowski sum separated with disjoint sets $E_j$ and $F_j$ for $1 \le j \le d-1$. Then, for a fixed $0 < \epsilon < 1$, we have
\[
\red{\ttsto_\epsilon(\mathcal{X}) \le \sum_{j = 1}^d (k_{d-1}\nu_{d-1})(k_d\nu_d)n_d}, \quad \nu_j=\rank(G_j),\quad 1 \le j \le d-1,
\]
where $G_j$ is the $j$th unfolding of $\mathcal{G}$ and $k_j$ is an integer so that $Z_{k_j}(E_j,F_j) \le \epsilon/\sqrt{d}$.
\end{theorem}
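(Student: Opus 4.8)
The plan is to assemble three ingredients that are already in place in the preceding material: the Kronecker-product structure of the unfoldings of $\mathcal{X}$, the Frobenius-norm Zolotarev estimate of \cref{lem:FrobNorm}, and the TT-SVD error bound \eqref{eq:num_ttrank}. First I would record that, upon rearranging \cref{eq:tensor_disp}, the $j$th unfolding $X_j$ of $\mathcal{X}$ satisfies the matrix displacement equation $B_jX_j - X_jC_j^{T} = G_j$ exactly as displayed in the paragraph before \cref{def:minkowski_separated}, where $G_j$ is the $j$th unfolding of $\mathcal{G}$ and $B_j$, $C_j$ are the Kronecker-sum matrices built from $A^{(1)},\dots,A^{(j)}$ and $A^{(j+1)},\dots,A^{(d)}$. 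By the Minkowski-sum-separated hypothesis together with standard Kronecker-sum identities~\cite[Thm.~2.5]{schacke2004kronecker}, $B_j$ and $C_j$ are normal with $\Lambda(B_j)\subseteq E_j$, $\Lambda(C_j)\subseteq F_j$ and $E_j\cap F_j=\emptyset$, while the displacement rank of $X_j$ is $\nu_j=\rank(G_j)$.

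Second, I would apply \cref{lem:FrobNorm} to each $X_j$ with the pair $(E_j,F_j)$: fixing $\delta=\epsilon/\sqrt{d}$ and letting $k_j$ be the smallest integer with $Z_{k_j}(E_j,F_j)\le\delta$, the best rank-$(\nu_j k_j)$ approximation of $X_j$ lies within relative Frobenius error $\delta$ of $X_j$. Hence $\rank_\delta(X_j)\le\nu_j k_j$, with the understanding that $\rank_\delta(X_j)$ never exceeds the dimensions of $X_j$, so the bound degrades gracefully if $\nu_j k_j$ is large (cf.\ \cref{eq:TT_trivial}).

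Third, I would invoke the TT-SVD estimate \eqref{eq:num_ttrank} with the same $\delta=\epsilon/\sqrt{d}$: it produces a tensor $\tilde{\mathcal{X}}$ with $\|\mathcal{X}-\tilde{\mathcal{X}}\|_F\le\sqrt{d}\,\delta\,\|\mathcal{X}\|_F=\epsilon\|\mathcal{X}\|_F$ and tensor-train rank $\pmb{s}=(1,\rank_\delta(X_1),\dots,\rank_\delta(X_{d-1}),1)$. Substituting $s_j\le\nu_j k_j$ for $1\le j\le d-1$ and $s_0=s_d=1$ into the tensor-train storage count $\ttsto\le\sum_{k=1}^{d}s_{k-1}s_kn_k$ from \cref{sec:TT} then yields the asserted bound on $\ttsto_\epsilon(\mathcal{X})$ (after collecting the factors $\nu_{k-1}k_{k-1}$ and $\nu_kk_k$ in each summand, with the endpoint conventions $\nu_0k_0=\nu_dk_d=1$).

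The main obstacle here is not a deep one: the argument is essentially bookkeeping once \cref{lem:FrobNorm} and \eqref{eq:num_ttrank} are available, the conceptual work having been done in establishing the displacement structure of the unfoldings. The one point that warrants care is the choice of splitting parameter, since \eqref{eq:num_ttrank} is naturally stated with $\delta=\epsilon/\sqrt{d-1}$; adopting the slightly smaller $\delta=\epsilon/\sqrt{d}$ is harmless (it only tightens the per-unfolding tolerance) and makes the statement line up cleanly with the multilinear-rank results of \cref{sec:tt}. A secondary check is that the sets $E_j$, $F_j$ supplied by \cref{def:minkowski_separated} are exactly the spectral inclusion sets needed to invoke \cref{lem:FrobNorm}, which is immediate from the identities for $\Lambda(B_j)$ and $\Lambda(C_j)$ above.
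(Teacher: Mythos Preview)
Your proposal is correct and follows essentially the same route as the paper: the argument preceding the theorem statement already establishes the displacement structure $B_jX_j-X_jC_j^T=G_j$ of each unfolding, applies \cref{lem:FrobNorm} to bound $\rank_\delta(X_j)\le k_j\nu_j$, and then summarizes the consequence via \eqref{eq:num_ttrank} and the TT storage formula. Your treatment of the $\delta=\epsilon/\sqrt{d}$ versus $\epsilon/\sqrt{d-1}$ point also matches the paper's explicit remark on this choice.
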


For special choices of $E_j$ and $F_j$, explicit bounds on $Z_{k_j}(E_j,F_j)$ are known and therefore the bounds in~\cref{thm:tt_disp} are also explicit. Here we mention two special cases: 

\subsubsection*{Intervals} If $\Lambda(A^{(j)}) \subseteq [a,b]$ for $0 < a < b < \infty$, then one can take $E_j = [ja,jb]$ and $F_j=[-(d-j)b,-(d-j)a]$ in~\cref{thm:tt_disp}.  From~\cite[Cor.~4.2]{beckermann2017singular}, we find that 
\[
Z_{k_j}(E_j,F_j) \leq 4\left[\exp \left(\frac{\pi^2}{2\log{(16\gamma_j)}}\right)\right]^{-2k}, \quad \gamma_j = \frac{(da+j(b-a))(db-j(b-a))}{abd^2}. 
\]
In particular, the following bound holds: 
\begin{equation} \label{eq:tt_real}
\red{\ttsto_\epsilon(\mathcal{X}) \le \sum_{j = 1}^d (k_{d-1}\nu_{d-1})(k_d\nu_d)n_d}, \qquad k_j=\Bigg\lceil \!\frac{\log(16\gamma_j)\log(4\sqrt{d}/\epsilon)}{\pi^2} \Bigg\rceil,
\end{equation}
where $\nu_j=\rank(G_j)$.

\subsubsection*{Disks} If $\Lambda(A^{(j)}) \subseteq \{z \in \C \ : \ |z-z_0| \le \eta\}$ for $0 < \eta < z_0$ and $z_0, \eta \in \R$, then one finds that $\Lambda(A^{(1)})+ \dots +\Lambda(A^{(j)}) \subseteq \{z \in \C \ : \ |z-jz_0| \le j\eta\}$ and $-(\Lambda(A^{(j+1)})+ \dots +\Lambda(A^{(d)})) \subseteq \{z \in \C \ : \ |z+(d-j)z_0| \le (d-j)\eta\}$. From~\cite[p.~123]{starke1992near}, we find that 
\[
Z_{k_j}(E_j,F_j) = \rho_j^{-k_j}, \qquad \rho_j = \frac{2j(d-j)\eta^2}{d^2z_0^2 - ((d-j)^2+j^2)\eta^2-\sqrt{\xi_j}}, 
\]
where $\xi_j = \left(d^2z_0^2- ((d-j)^2+j^2)\eta^2\right)^2 -4j^2(d-j)^2\eta^4$. In particular, 
\begin{equation} \label{eq:tt_disk}
\red{\ttsto_\epsilon(\mathcal{X}) \le \sum_{j = 1}^d (k_{d-1}\nu_{d-1})(k_d\nu_d)n_d}, \qquad k_j = \Bigl\lceil \log(\sqrt{d}/\epsilon)/\log(\rho_j)\Bigr\rceil,
\end{equation}
where $\nu_j=\rank(G_j)$.

In~\cref{sec:examples}, we use~\cref{eq:tt_real} to bound the numerical storage cost in tensor-train format of the Hilbert tensor and the solution tensor of a discretized Poisson equation.

\subsection{The compressibility of tensors with displacement structure in the Tucker format}\label{sec:Tucker}
The matrix SVD can be used to calculate the numerical multilinear rank~\cite{kolda2009tensor,de2000multilinear}. Indeed, if $\mathcal{X} \in \C^{n_1 \times \dots \times n_d}$ is a tensor and $0 < \epsilon < 1$, then there exists a tensor $\tilde{\mathcal{X}}$ such that~\cite{de2000multilinear}:
\[
||\mathcal{X}-\tilde{\mathcal{X}}||_F \le \epsilon ||\mathcal{X}||_F, \quad \mlrank(\tilde{\mathcal{X}}) =(\rank_\delta(X_{(1)}),\dots,\rank_\delta(X_{(d)})),
\]
where $\delta=\epsilon/\sqrt{d}$ and $X_{(j)}$ is the $j$th matricization of $\mathcal{X}$.

Since the first unfolding of $\mathcal{X}$ coincides with the first matricization of $\mathcal{X}$, the bound on \red{the second element of the tensor-train rank is also a bound on the first element of the} multilinear rank of $\mathcal{X}$. One can use a similar idea to bound all entries of the multilinear ranks by considering the various matricizations. However, one finds that the spectra of $A^{(1)},\dots,A^{(d)}$ need to be separated in a slightly different sense. 

\begin{definition} \label{def:minkowski_separated_singly}
We say that normal matrices $A_1,\dots,A_d$ are Minkowski singly separated if there are disjoint sets $E_j$ and $F_j$ so that
\[ \Lambda(A_j) \subseteq E_j, \quad -\!\left(\!\sum_{k=1,k \neq j}^d\! \Lambda(A_k)\!\right) \subseteq F_j, \quad 1 \le j \le d, \]
where the set additions are Minkowski sums and $\Lambda(A_j)$ denotes the spectrum of $A_j$.
\end{definition}

\Cref{fig:min_sing} illustrates the spectra of Minkowski singly separated matrices $A^{(1)}$, $A^{(2)}$, and $A^{(3)}$ along with their enclosed sets and Minkowski sums of the sets. Under this separation condition, we have the following theorem:

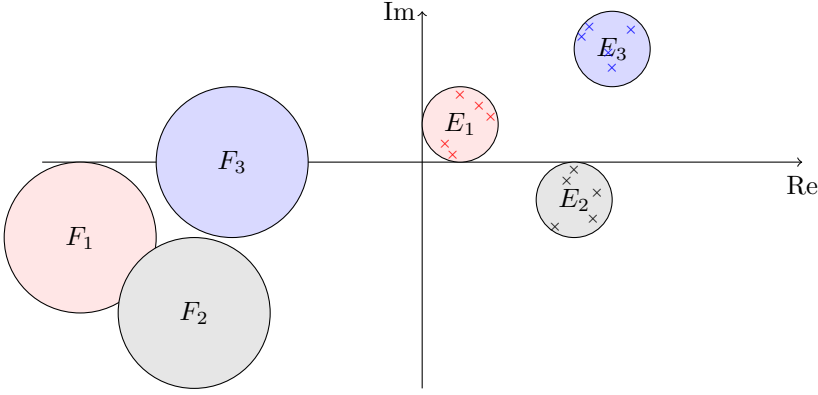
\begin{figure}
\label{fig:min_sing}
\centering
\begin{tikzpicture}
\draw[->] (-5,0)--(5,0);
\filldraw[black] (5,-0.3) node {Re};
\draw[->] (0,-3)--(0,2);
\filldraw[black] (-0.3,2) node {Im};
\filldraw[color=black,fill=pink!40] (0.5,0.5) circle (0.5);
\filldraw[black] (0.5,0.5) node {$E_1$};
\filldraw[red] (0.75,0.75) node {\tiny{$\times$}};
\filldraw[red] (0.9,0.6) node {\tiny{$\times$}};
\filldraw[red] (0.3,0.25) node {\tiny{$\times$}};
\filldraw[red] (0.4,0.1) node{\tiny{$\times$}};
\filldraw[red] (0.5,0.9) node{\tiny{$\times$}};
\filldraw[color=black,fill=gray!20] (2,-0.5) circle (0.5);
\filldraw[black] (2,-0.5) node {$E_2$};
\filldraw[black] (2.25,-.75) node {\tiny{$\times$}};
\filldraw[black] (1.75,-0.85) node {\tiny{$\times$}};
\filldraw[black] (2.3,-0.4) node {\tiny{$\times$}};
\filldraw[black] (2,-0.1) node {\tiny{$\times$}};
\filldraw[black] (1.9,-0.25) node {\tiny{$\times$}};
\filldraw[color=black,fill=blue!15] (2.5,1.5) circle (0.5);
\filldraw[black] (2.5,1.5) node {$E_3$};
\filldraw[blue] (2.75,1.75) node {\tiny{$\times$}};
\filldraw[blue] (2.1,1.66) node {\tiny{$\times$}};
\filldraw[blue] (2.5,1.25) node {\tiny{$\times$}};
\filldraw[blue] (2.2,1.8) node {\tiny{$\times$}};
\filldraw[blue] (2.45,1.45) node {\tiny{$\times$}};
\filldraw[color=black,fill=pink!40] (-4.5,-1) circle (1);
\filldraw[black] (-4.5,-1) node {$F_1$};
\filldraw[color=black,fill=blue!15] (-2.5,0) circle (1);
\filldraw[black] (-2.5,-0) node {$F_3$};
\filldraw[color=black,fill=gray!20] (-3,-2) circle (1);
\filldraw[black] (-3,-2) node {$F_2$};
\end{tikzpicture}
\caption{Minkowski singly separated matrices $A^{(1)},A^{(2)}$, and $A^{(3)}$ where the colored crosses denote the spectrum of $A^{(1)}$, $A^{(2)}$, and $A^{(3)}$, respectively. Here, $\Lambda(A^{(1)}) \subseteq E_1$, $\Lambda(A^{(2)}) \subseteq E_2$, $\Lambda(A^{(3)}) \subseteq E_3$, $-(\Lambda(A^{(1)})+\Lambda(A^{(2)})) \subseteq F_3$, $-(\Lambda(A^{(1)})+\Lambda(A^{(3)})) \subseteq F_2$, and $-(\Lambda(A^{(2)})+\Lambda(A^{(3)})) \subseteq F_1$. By definition, we have that $E_1$ is disjoint from $F_1$ (red regions), that $E_2$ is disjoint from $F_2$ (gray regions), and that $E_3$ is disjoint from $F_3$ (blue regions).}
\end{figure}

\begin{theorem} \label{thm:ml_disp}
Suppose $\mathcal{X} \in \C^{n_1 \times \dots \times n_d}$ satisfies~\cref{eq:tensor_disp}, where $A^{(1)},\dots,A^{(d)}$ are Minkowski singly separated with disjoint sets $E_j$ and $F_j$ for $1 \le j \le d$. Then, for a fixed $0 < \epsilon < 1$, we have
\[
\red{\mlsto_\epsilon(\mathcal{X}) \le \sum_{j=1}^d n_jk_j\mu_j+\prod_{j=1}^d k_j\mu_j}, \quad \quad \mlrank(\mathcal{G})=(\mu_1, \dots, \mu_d),
\]
where $k_j$ is an integer so that $Z_{k_j}(E_j,F_j) \le \epsilon/\sqrt{d}$.
\end{theorem}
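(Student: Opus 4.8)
The plan is to mirror the argument behind \cref{thm:tt_disp}, but working with the mode-$j$ matricizations $X_{(j)}$ instead of the unfoldings $X_j$, and then to feed the resulting numerical matrix ranks into the truncated higher-order SVD bound stated just before \cref{def:minkowski_separated_singly}. That bound produces a tensor $\tilde{\mathcal{X}}$ with $\|\mathcal{X}-\tilde{\mathcal{X}}\|_F \le \epsilon\|\mathcal{X}\|_F$ and $\mlrank(\tilde{\mathcal{X}}) = (\rank_\delta(X_{(1)}),\dots,\rank_\delta(X_{(d)}))$ for $\delta = \epsilon/\sqrt{d}$. Since a Tucker decomposition of multilinear rank $\pmb{t}=(t_1,\dots,t_d)$ costs $\sum_{j=1}^d n_j t_j + \prod_{j=1}^d t_j$ degrees of freedom (see~\cref{sec:OrthogonalTucker}), it suffices to show $\rank_\delta(X_{(j)}) \le k_j\mu_j$ for each $1 \le j \le d$, where $k_j$ is an integer with $Z_{k_j}(E_j,F_j) \le \delta$.

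First I would matricize \cref{eq:tensor_disp} in mode $j$. Using the standard identity that $\mathcal{X}\times_j A^{(j)}$ matricizes to $A^{(j)}X_{(j)}$, while for $k \ne j$ the term $\mathcal{X}\times_k A^{(k)}$ matricizes to $X_{(j)}\bigl(I \otimes \cdots \otimes A^{(k)} \otimes \cdots \otimes I\bigr)^T$ (Kronecker product over the modes $\ne j$ in the column order of the matricization), the equation becomes
\[
A^{(j)}X_{(j)} - X_{(j)}(-C_j)^T = G_{(j)}, \qquad C_j = \sum_{k \ne j} I \otimes \cdots \otimes A^{(k)} \otimes \cdots \otimes I,
\]
where $G_{(j)}$ is the $j$th matricization of $\mathcal{G}$. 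By the same Kronecker-product facts invoked for \cref{thm:tt_disp}~\cite[Thm.~2.5]{schacke2004kronecker}, $C_j$ is normal with $\Lambda(-C_j) = -\sum_{k \ne j}\Lambda(A^{(k)}) \subseteq F_j$, while $\Lambda(A^{(j)}) \subseteq E_j$; since $A^{(1)},\dots,A^{(d)}$ are Minkowski singly separated, $E_j$ and $F_j$ are disjoint. Moreover $\rank(G_{(j)}) = \mu_j$ by the definition of $\mlrank(\mathcal{G})$.

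Next I would apply the matrix theory. The displayed identity is an instance of \cref{eq:displacementStructure} with $\nu = \mu_j$ (possibly after transposing so that the matrix is tall, which changes neither the Frobenius-norm rank nor the relevant spectra), and the two normal matrices have spectra in the disjoint sets $E_j$ and $F_j$. Hence \cref{lem:FrobNorm} together with \cref{eq:zolo_fro} gives $\rank_\delta(X_{(j)}) \le k_j\mu_j$ whenever $Z_{k_j}(E_j,F_j) \le \delta = \epsilon/\sqrt{d}$. Substituting these componentwise bounds into $\mlrank(\tilde{\mathcal{X}})$ and then into the Tucker storage count $\sum_{j=1}^d n_j t_j + \prod_{j=1}^d t_j$ yields $\mlsto_\epsilon(\mathcal{X}) \le \sum_{j=1}^d n_j k_j\mu_j + \prod_{j=1}^d k_j\mu_j$, as claimed.

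The only genuinely non-bookkeeping step is getting the mode-$j$ matricization of \cref{eq:tensor_disp} into the exact shape of \cref{eq:displacementStructure}: one must check that the ``off-mode'' terms collapse into a single right multiplication by the Kronecker sum $C_j^T$, that $C_j$ is normal with the correct Minkowski-sum spectrum so that $\Lambda(-C_j) \subseteq F_j$, and that the right-hand side $G_{(j)}$ really has rank $\mu_j$. Everything afterward is a direct invocation of \cref{lem:FrobNorm}, \cref{eq:zolo_fro}, the HOSVD truncation bound, and the Tucker degrees-of-freedom count.
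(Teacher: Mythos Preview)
Your proof is correct and is essentially the same as the paper's. The paper phrases it slightly differently by introducing cyclically permuted tensors $\mathcal{Y}^j$ with $Y^j_{(1)} = X_{(j)}$ and then invoking \cref{thm:tt_disp} as a black box, but the underlying Sylvester equation for $X_{(j)}$ and the appeal to \cref{lem:FrobNorm} are identical to what you wrote out directly.
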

\begin{proof}
\red{One can bound all the entries of the multilinear rank vector of $\mathcal{X}$ by the second entry of} the tensor-train rank vector of the tensors $\mathcal{Y}^1,\dots,\mathcal{Y}^d$ (see~\cref{eq:cyc_mat}). Due to the way $\mathcal{Y}^j$ is constructed, it can be shown that $\mathcal{Y}^j$ satisfies
\[ \mathcal{Y}^j \times_1 A^{(j)} + \cdots + \mathcal{Y}^j \times_{d-j+1} A^{(d)} + \mathcal{Y}^j \times_{d-j+2} A^{(1)}+ \cdots + \mathcal{Y}^j \times_d A^{(j-1)} = \mathcal{H}^j, \]
where $H^j_{(1)}=G_{(j)},\dots,H^j_{(d-j+1)}=G_{(d)},H^j_{(d-j+2)}=G_{(1)},\dots,H^j_{(d)}=G_{(j-1)}$ and $\mathcal{H}^j$ is constructed from $\mathcal{G}$ in the same way that $\mathcal{Y}^j$ is constructed from $\mathcal{X}$. The result follows from~\Cref{thm:tt_disp} as the $j$th element of the multilinear rank of $\mathcal{X}$ is bounded above by the bound of the \red{second entry} of the tensor-train rank of $\mathcal{Y}^j$.
\end{proof}

As before, explicit \red{bounds on the compressibility in Tucker format} can be obtained from~\cref{thm:ml_disp} by special choices of $E_j$ and $F_j$ such as when they are intervals or disks.

\subsubsection*{Intervals} If $\Lambda(A^{(j)}) \subseteq [a,b]$ for $0 < a < b < \infty$, then one can take $E_j = [a,b]$ and $F_j = [-(d-1)b,-(d-1)a]$. Therefore, we find that~\cite[Cor.~4.2]{beckermann2017singular}
\[
\red{\mlsto_\epsilon(\mathcal{X}) \le k\sum_{j=1}^d n_j\mu_j+k^d\prod_{j=1}^d \mu_j}, \qquad k=\Bigg\lceil \!\frac{\log(16\gamma)\log(4\sqrt{d}/\epsilon)}{\pi^2} \Bigg\rceil, 
\]
where $\gamma = (da+(b-a))(db-(b-a))/(abd^2)$ and $\mlrank(\mathcal{G})=(\mu_1,\dots,\mu_d)$.

\subsubsection*{Disks} If $\Lambda(A^{(j)}) \subseteq \{z \in \C \ : \ |z-z_0| \le \eta\}$ for $0 < \eta < z_0$ and $z_0, \eta \in \R$, then one can take $E_j = \{z \in \C \ : \ |z-z_0| \le \eta\}$ and $F_j = \{z \in \C \ : \ |z+(d-1)z_0| \le (d-1)\eta\}$.  From~\cite[p.~123]{starke1992near}, we find that 
\[
\red{\mlsto_\epsilon(\mathcal{X}) \le k\sum_{j=1}^d n_j\mu_j+k^d\prod_{j=1}^d \mu_j}, \qquad k = \Bigl\lceil \log(\sqrt{d}/\epsilon)/\log(\rho)\Bigr\rceil,
\]
where $\rho = (2(d-1)\eta^2)/(d^2z_0^2 - ((d-1)^2+1)\eta^2-\sqrt{\xi})$, $\xi = (d^2z_0^2- ((d-1)^2+1)\eta^2)^2 -4(d-1)^2\eta^4$, and $\mlrank(\mathcal{G})=(\mu_1,\dots,\mu_d)$.

\subsection{The compressibility of tensors with displacement structure in the CP format}\label{sec:CP} 
While deriving the bounds in this paper, we also considered including bounds on the \red{compressibility of tensors with displacement structure in CP format}. We were unable to come up with nontrivial bounds unless we introduced several additional and arbitrary assumptions in the statements of our theorem.  

\subsection{Worked examples} \label{sec:examples}
Here, we give two examples that illustrate how to use the displacement structure of a tensor to understand its compressibility. Since the bounds in \red{tensor-train format and Tucker format are related through ranks}, we only show results \red{for the tensor-train format. As in the previous examples, we use the second element of the tensor-train rank and its bound to visualize the compressibility}. We consider two tensors: (1) The 3D Hilbert tensor and (2) The solution tensor of a Poisson equation. 

\subsubsection{The 3D Hilbert tensor}
Consider the Hilbert tensor $\mathcal{H} \in \C^{n \times n \times n}$ defined by
\[ \mathcal{H}_{ijk} = \frac{1}{i+j+k-2}, \qquad 1 \le i,j,k \le n. \]
This tensor is analogous to the notoriously ill-conditioned Hilbert matrix~\cite{hilbert1894beitrag,eckart1936approximation}. It is easy to verify that the tensor possesses the following displacement structure:
\[ 
\mathcal{H} \times_1 D + \mathcal{H} \times_2 D + \mathcal{H} \times_3 D = \mathcal{S}, 
\]
where $\mathcal{S}$ is the tensor of all ones and $D$ is a diagonal matrix with $D_{ii} = i-\frac{2}{3}$. Thus, $\rank(\mathcal{S})=1$ and the ranks of the unfoldings of $\mathcal{S}$ are all 1. 

Since the spectrum of $D$ is contained in $[\frac{1}{3}, \frac{3n-2}{3}]$,~\cref{eq:tt_real} tells us that for any $0 < \epsilon < 1$ we have
\begin{equation} \label{hilbert_bound}
\red{\ttsto_{\epsilon}(H) \le n(s_1^2+2s_1)}, \qquad s_1=\Biggl\lceil \frac{1}{\pi^2}\log\!\left(\frac{16n(2n-1)}{3n-2}\right)\log\!{\left(\frac{4\sqrt{3}}{\epsilon}\right)} \Biggr\rceil.
\end{equation}
That is, $s_1 = \mathcal{O}(\log n \log(1/\epsilon))$ and means that the $n\times n\times n$ Hilbert tensor can be stored, up to an accuracy of $\epsilon$ in the Frobenius norm, in just $\mathcal{O}(n (\log n)^2(\log(1/\epsilon))^2)$ degrees of freedom. 
\Cref{fig:DispHilbert} (left) shows \red{the compressibility} of $\mathcal{H}$ \red{with $n = 100$ by computing the ratio of the storage costs using tensor-train format and explicit storage. Our theoretical results bound the savings well. \Cref{fig:DispHilbert} (right) shows the compressibility of $\mathcal{H}$ by plotting $s_1$ and its bound in~\eqref{hilbert_bound} for different values of $n$}. The actual tensor-train ranks of $\mathcal{H}$ are computed with the TT-SVD algorithm~\cite{oseledets2011tensor}.

\begin{figure} 
\begin{minipage}{.49\textwidth}
\begin{overpic}[width=\textwidth]{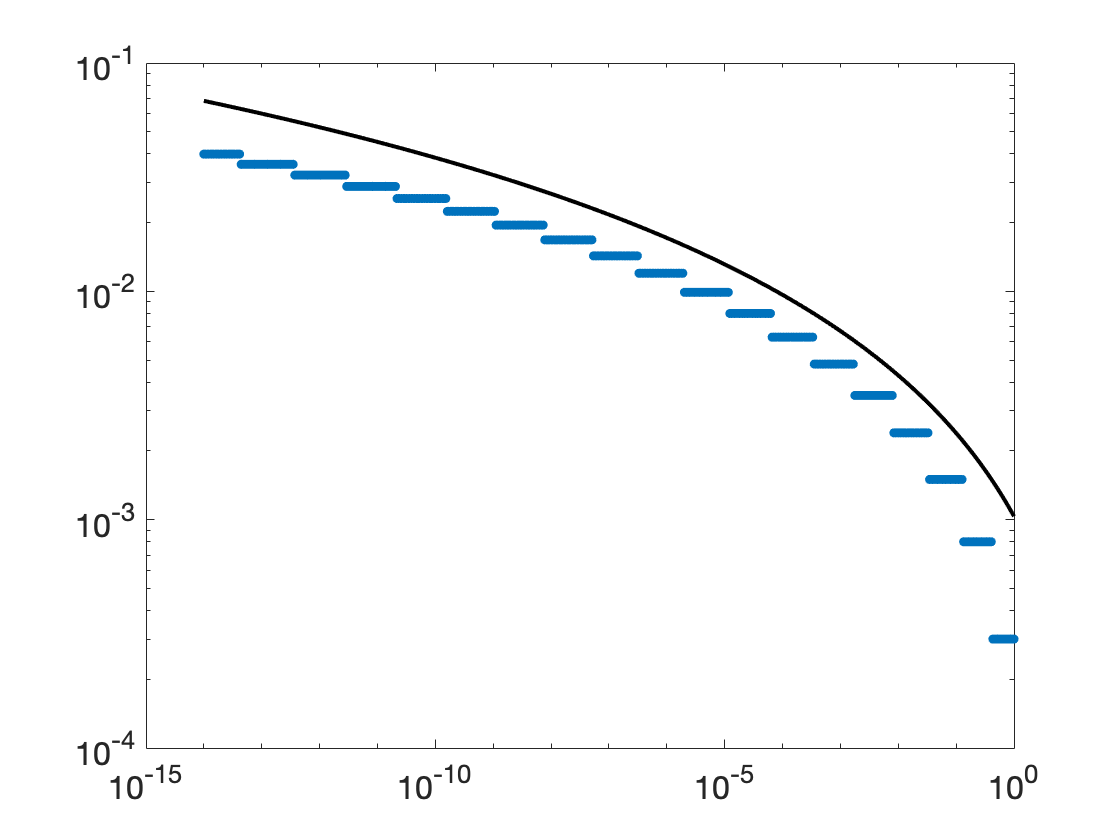}
\put(40,-1) {Accuracy}
\put(0,17) {\rotatebox{90}{Compression rate}}
\put(35,51) {\rotatebox{-12}{Exact}}
\put(25,65) {\rotatebox{-12}{Bound}}
\end{overpic} 
\end{minipage} 
\begin{minipage}{.49\textwidth}
\begin{overpic}[width=\textwidth]{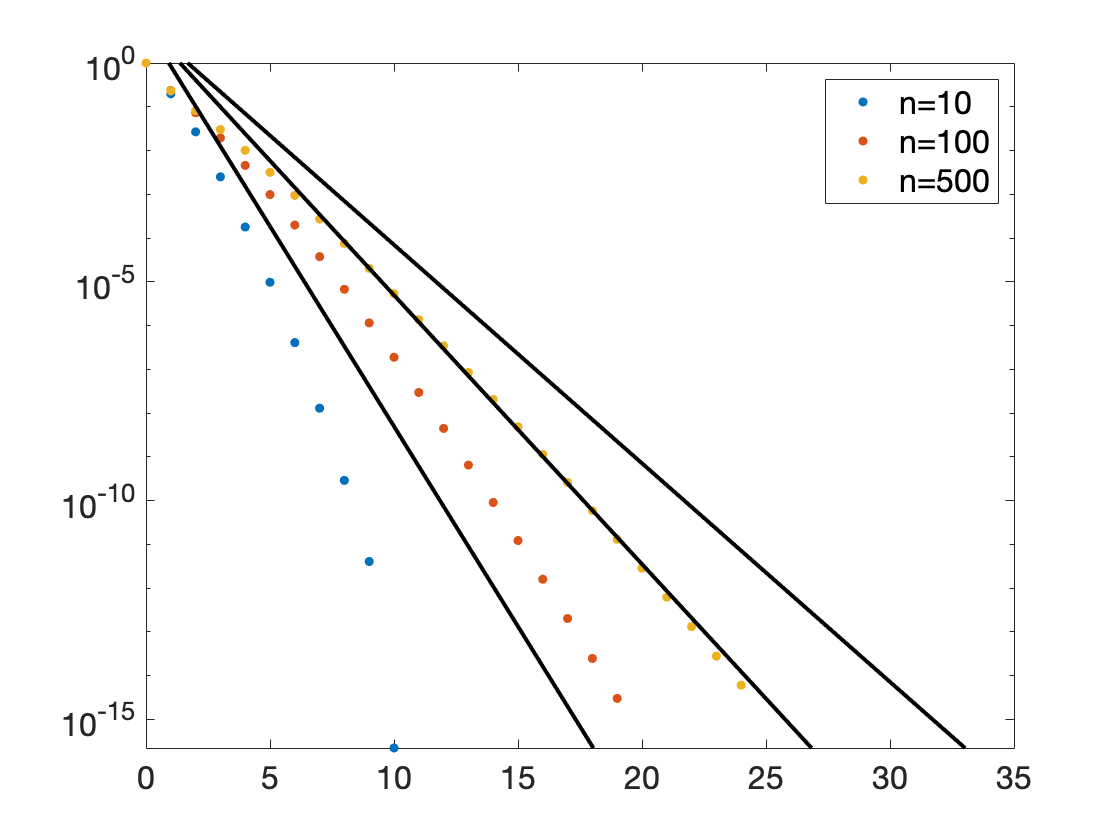}
\put(43,-1) {TT-rank}
\put(-1,28) {\rotatebox{90}{Accuracy}}
\put(72,22) {\rotatebox{-40}{$n=500$}}
\put(54,23) {\rotatebox{-50}{$n=100$}}
\put(40,23) {\rotatebox{-60}{$n=10$}}
\end{overpic} 
\end{minipage} 
\caption{\red{The compressibility of the 3D Hilbert tensor in tensor-train format}. Left: \red{The ratio between the storage cost for representing a $100 \times 100 \times 100$ Hilbert tensor in a tensor-train format and $100^3$ (blue dots), along with our theoretical bound on the compression rate (black line)}. Right: The \red{second element of the} tensor-train rank, $s_1$, (dots) and the theoretical bound (black lines) for $n = 10,100,$ and $500$.} 
\label{fig:DispHilbert} 
\end{figure}

\subsubsection{Tensor solution of a discretized Poisson equation}\label{sec:PoissonExample}
\red{Tensor decompositions can be incorporated into efficient solvers of partial differential equations~\cite{beylkin2005algorithms, oseledets2012solution, ballani2013projection, sun2014numerical, khoromskij2017rank, khoromskij2018tensor}.  Displacement structure arises for the solution tensor when one discretizes a Laplace operator, or any Laplace-like operator. Here, }consider the 3D Poisson equation on $[-1,1]^3$ with zero Dirichlet conditions, i.e., 
\begin{equation} \label{eq:poiss_eqn}
-(u_{xx}+u_{yy}+u_{zz})=f\ {\rm on} \ \Omega=[-1,1]^3, \qquad u|_{\partial \Omega}=0.
\end{equation}
If one writes down a second-order finite difference discretization of~\cref{eq:poiss_eqn} on an $n \times n \times n$ equispaced grid, then one obtains the following multidimensional Sylvester equation
\[
\mathcal{X}\times_1 K + \mathcal{X}\times_2 K + \mathcal{X}\times_3 K = \mathcal{F}, \qquad K = -\frac{1}{h^2}\begin{bmatrix}2 & -1 \cr -1 & \ddots & \ddots \cr & \ddots & \ddots & -1 \cr & & -1 & 2 \end{bmatrix},
\]
where $h = 2/n$ and $\mathcal{F}_{ijk} = f(i h-1,j h-1,k h-1)$ for $1\leq i,j,k\leq n-1$. The solution tensor $\mathcal{X}$ is unknown and for large $n$, one hope that $\mathcal{X}_{ijk}\approx u(i h-1,j h-1,k h-1)$ for $1\leq i, j, k\leq n-1$ is a reasonably good approximation.  The eigenvalues of $K$ are given by $4/h^2\sin^2(\pi k/(2n))$ for $1 \leq k \leq n$ with $h = 2/n$~\cite[(2.23)]{leveque2007finite}. Since $(2/\pi)x \leq \sin x \leq 1$ for $x \in [0,\pi/2]$ and $h = 2/n$, the eigenvalues of $K$ are contained in the interval $[1,n^2]$.

We are interested in understanding the \red{compressibility of $\mathcal{X}$ in tensor-train format} when $f = 1$.  Since $\Lambda(K)\subseteq [1,n^2]$ and $\ttrank(\mathcal{F}) = (1,1,1,1)$,~\cref{eq:tt_real} gives
\begin{equation} \label{eq:poisson_tt_bound}
\red{\ttsto_{\epsilon}(\mathcal{X}) \le n(s_1^2+2s_1)}, \quad s_1=\Biggl\lceil \frac{1}{\pi^2}\log\left(\frac{16(n^2+2)(2n^2+1)}{9n^2}\right)\log\left(\frac{4\sqrt{3}}{\epsilon}\right) \Biggr\rceil.
\end{equation}

\Cref{fig:Poisson_tt} (left) shows the \red{second element of the} tensor-train rank, $s_1$, and the bound of the approximate solution tensor to the Poisson equation via finite difference discretization.
\begin{figure} 
\centering
\begin{minipage}{0.49\textwidth}
\begin{overpic}[width=\textwidth]{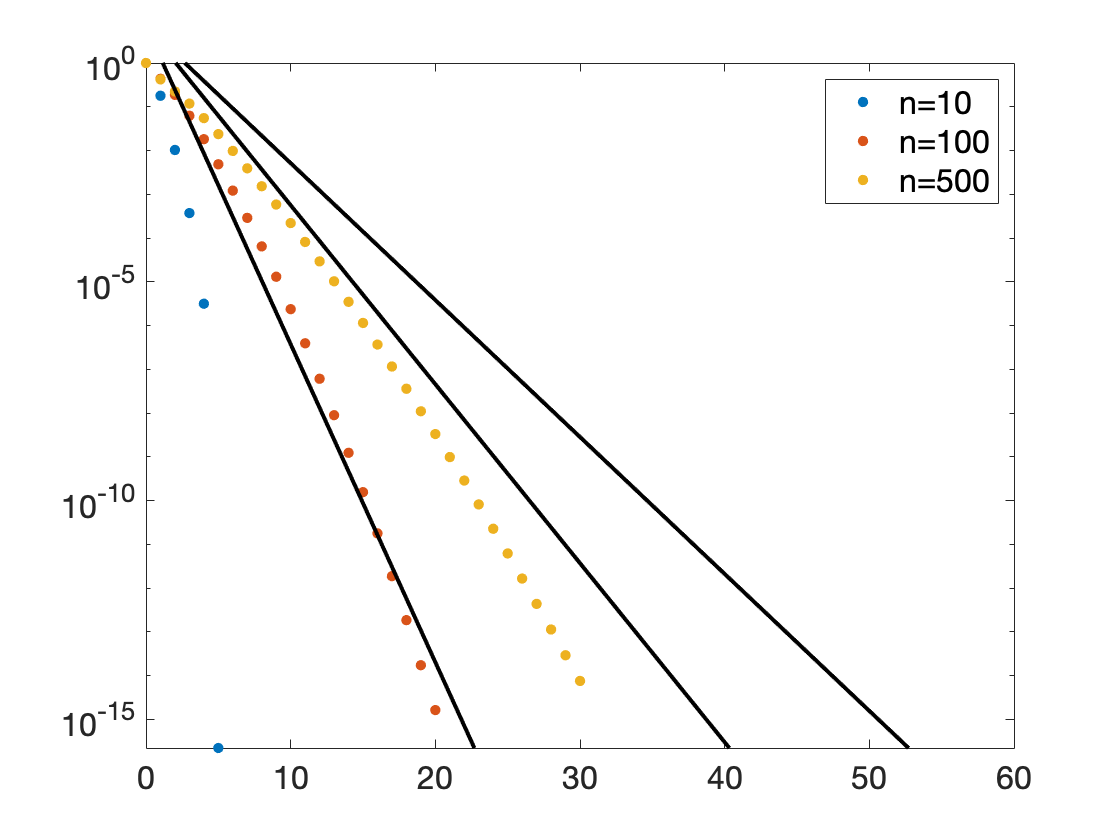}
\put(43,-2) {TT-rank}
\put(-1,28) {\rotatebox{90}{Accuracy}}
\put(37,23) {\rotatebox{-62}{$n=10$}}
\put(55,23) {\rotatebox{-49}{$n=100$}}
\put(68,23) {\rotatebox{-42}{$n=500$}}
\end{overpic} 
\end{minipage}
\begin{minipage}{0.49\textwidth}
\begin{overpic}[width=\textwidth]{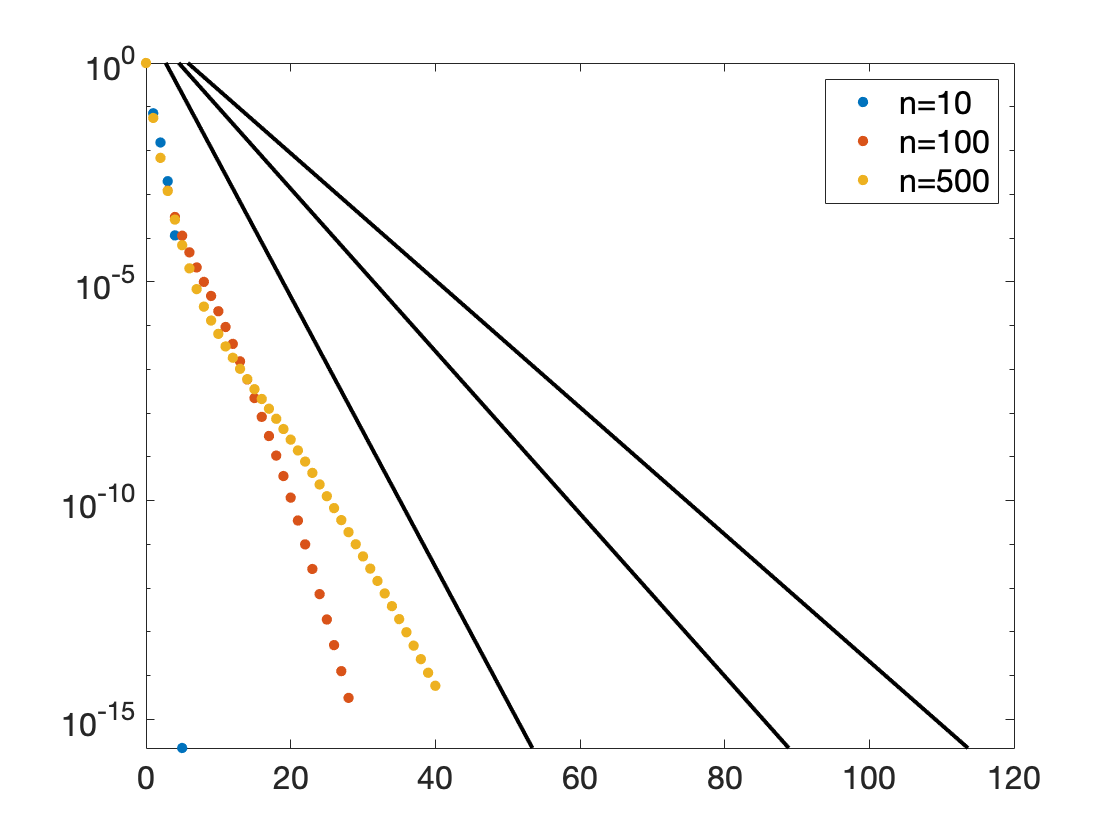}
\put(43,-2) {TT-rank}
\put(-1,28) {\rotatebox{90}{Accuracy}}
\put(41,23) {\rotatebox{-62}{$n=10$}}
\put(59,23) {\rotatebox{-49}{$n=100$}}
\put(71,23) {\rotatebox{-42}{$n=500$}}
\end{overpic} 
\end{minipage}
\caption{Left: The \red{second element of the} tensor-train rank, $s_1$, (blue, red, and yellow dots) of the finite difference solution to $-(u_{xx}+u_{yy}+u_{zz})=1$ on $[-1,1]^3$ with zero Dirichlet conditions, and the theoretical bound in~\cref{eq:poisson_tt_bound} (black lines). \red{Right: The \red{second element of the} tensor-train rank, $s_1$, (blue, red, yellow dots) of the ultraspherical spectral solution to $-(u_{xx}+u_{yy}+u_{zz})=1$ on $[-1,1]^3$ with zero Dirichlet conditions, and the theoretical bound in~\cref{eq:poisson_tt_bound_sp} (black lines).}}
\label{fig:Poisson_tt} 
\end{figure}

\red{One wonders if there is also a fast Poisson solver for spectral discretizations. This turns out to be feasible with a carefully constructed ultraspherical spectral discretization. The Poisson equation can be discretized to a tensor equation as~\cite{fortunato2017fast}:
\begin{equation} \label{eq:pois_sp_sylv}
\mathcal{X} \times_1 A^{-1} + \mathcal{X} \times_2 A^{-1} + \mathcal{X} \times_3 A^{-1} = \mathcal{G}, 
\end{equation}
where 
\[ u(x,y,z) = (1-x^2)(1-y^2)(1-z^2) \sum_{p=0}^n \sum_{q=0}^n \sum_{r=0}^n \mathcal{X}_{pqr} \tilde{C}^{(3/2)}_p(x) \tilde{C}^{(3/2)}_q(y) \tilde{C}^{(3/2)}_r(z), \]
\[f(x,y,z) = \sum_{p=0}^n \sum_{q=0}^n \sum_{r=0}^n \mathcal{F}_{pqr} \tilde{C}^{(3/2)}_p(x) \tilde{C}^{(3/2)}_q(y) \tilde{C}^{(3/2)}_r(z), \] 
$\tilde{C}_k^{(3/2)}$ is the degree $k$ orthonormalized ultraspherical polynomial with parameter $\frac{3}{2}$~\cite[Table 18.3.1]{olver2010nist}, $\mathcal{G} = \mathcal{F} \times_1 M^{-1} \times_2 M^{-1} \times_3 M^{-1}$, $A = D^{-1}M$, $D$ is a diagonal matrix, $M$ and $A$ are both symmetric pentadiagonal matrices, and the spectrum of $A$ satisfies $\Lambda(A) \in [-1,\ -1/(30n^4)]$. If $f = 1$, ~\cref{eq:tt_real} gives
\begin{equation} \label{eq:poisson_tt_bound_sp}
\ttsto_{\epsilon}(\mathcal{X}) \le n(s_1^2+2s_1), \ s_1=\Biggl\lceil \frac{1}{\pi^2}\log\left(\frac{16(30n^4+2)(60n^4+1)}{270n^4}\right)\log\left(\frac{4\sqrt{3}}{\epsilon}\right) \Biggr\rceil.
\end{equation}}

\red{\Cref{fig:Poisson_tt} (right) shows the second element of the tensor-train rank, $s_1$, and the bound of the approximate solution tensor to the Poisson equation via ultraspherical spectral discretization. This spectral discretization indicates that the $n \times n \times n$ tensor discretization of the solution can be approximated with only $\mathcal{O}(dn(\log n)^2(\log (1/\epsilon))^2)$ degrees of freedom. This is a significant reduction in the cost of storing the solution, with a relatively straightforward decomposition. Comparatively, one can achieve $\mathcal{O}(d \log n \log(1/\epsilon))$ with quantics tensor formats~\cite{khoromskij2011dlog, kazeev2018quantized}, but those require more complicated representations.}


\red{Some special functions can be well-approximated by exponential sums of the form
\[ S_k(x) = \sum_{j=1}^k \alpha_j e^{-t_jx}, \qquad \alpha_j,t_j \in \R, \]
and these approximant can be used to represent the solution to PDEs with Laplace-like operators~\cite{grasedyck2004existence,khoromskij2009tensor}. In~\cite{khoromskij2009tensor}, the author uses exponential sums to show that the solution tensor to several 3D elliptic PDEs can be $\mathcal{O}(dn(\log n)^2(\log (1/\epsilon))^2)$ degrees of freedom. However, the constants in these compressibility statements are left implicit. 
In general, both exponential sum approximation and Zolotarev numbers can be used to bound the $k$th singular value of matrices with displacement structure and capture the geometric decay, but the Zolotarev bound tends to be tighter and does not involve an algebraic factor related to $k$~\cite{townsend2017singular}.}

\subsection{Solving for tensors in compressed formats} \label{sec:poisson_solver}
\red{Since the proof of~\Cref{thm:tt_disp} and~\Cref{thm:ml_disp} are constructive, we can use their implicit algorithms to solve 3D tensor Sylvester equations of the form:
\begin{equation} \label{eq:3d_syl}
\mathcal{X} \times_1 A^{(1)} + \mathcal{X} \times_2 A^{(2)} + \mathcal{X} \times_3 A^{(3)} = \mathcal{F},
\end{equation}
where $A^{(1)} \in \C^{n_1 \times n_1}$, $A^{(2)} \in \C^{n_2 \times n_2}$, $A^{(3)} \in \C^{n_3 \times n_3}$, and $\mathcal{F} \in \C^{n_1 \times n_2 \times n_3}$.  In particular, we can compute approximate solutions to~\cref{eq:3d_syl} in tensor-train or Tucker format when $\mathcal{F}$ is a low rank tensor and the spectra of $A^{(1)}$, $A^{(2)}$, and $A^{(3)}$ are well-separated. If $A^{(1)}$, $A^{(2)}$, and $A^{(3)}$ are Minkowski sum separated, and the unfoldings $F_1$ and $F_2$ of $\mathcal{F}$ have low rank decompositions $F_1 = W_1Z_1^*$, and $F_2 = W_2Z_2^*$ with rank $r_1$ and $r_2$, respectively, then we can solve for $\mathcal{X}$ in tensor-train format.}

\red{The tensor-train factors of $\mathcal{X}$ obtained by the TT-SVD algorithm are orthogonal matrices for the column and row spaces of unfoldings of $X$.
For example, the first tensor-train factor $U_1$ of $\mathcal{X}$ can be found as a matrix with orthonormal columns spanning the column space of the first unfolding $X_1$. Since $\mathcal{X}$ satisfies~\cref{eq:3d_syl}, we find that $X_1$ satisfies the Sylvester equation
\begin{equation} 
A^{(1)}X_1 + X_1(I \otimes A^{(2)}+A^{(3)} \otimes I)^T = W_1Z_1^*. 
\label{eq:X1} 
\end{equation} 
We can use the factored alternating direction implicit (fADI) method to solve~\cref{eq:X1} for a matrix $V_1$ such that $X_1 = V_1D_1Y_1^*$~\cite{benner2009adi}. One can then use the QR decomposition of $V_1$, i.e., $V_1 = U_1R_1$, to calculate the first tensor-train core $U_1$.}

\red{Second and third tensor-train factors can be computed by finding matrices with orthonormal columns for the column and row spaces associated to $C_2$, where $C_2={\rm reshape}(R_1D_1Y_1^*, s_1n_2, n_3)$. It can be shown that $C_2$ satisfies the Sylvester equation
\[
(I \otimes (U_1^*A^{(1)}U_1)+A^{(2)} \otimes I)C_2+C_2 (A^{(3)})^T = (I \otimes U_1^*)W_2Z_2^*. 
\]
One can, again, use fADI to solve for a low rank decomposition of $C_2$, i.e., $C_2=V_2D_2Y_2^*$. This low rank decomposition can be compressed by performing a QR factorization of $V_2$ and $Y_2$ and then doing a SVD to obtain $C_2 \approx U_2\Sigma T_2^*$, where $U_2$ and $T_2$ are matrices with $s_2$ orthonormal columns and $\Sigma$ is a diagonal matrix. In this way, the second tensor-train factor is $U_2={\rm reshape}(U_2, [s_1,n_2,s_2])$ and the third factor $U_3=\Sigma T_2^*$. Although the fADI method requires the solution of shifted linear systems with $I \otimes (U_1^*A^{(1)}U_1)+A^{(2)} \otimes I$, the Kronecker product structure allows one to reshape these linear systems into Sylvester equations, which can themselves be solved with the alternating direction implicit (ADI) method~\cite{benner2009adi}. That is, one can completely avoid solving a huge linear system. As a result, if $n_1=n_2=n_3=n$, and $A^{(1)}$, $A^{(2)}$, and $A^{(3)}$ have structures so that shifted linear systems can be solved in $\mathcal{O}(n)$, then the solver has a complexity that is less than $\mathcal{O}(n^3)$. In summary, the ADI-based tensor Sylvester equation solver is the following:}

\begin{algorithm} 
\caption{A 3D Sylvester equation~\eqref{eq:3d_syl} solver that solves the solution in tensor-train form.}
\begin{algorithmic}[1]
\State Use f-ADI to solve for the column space $Z_1$ of $X_1$ that satisfies $A^{(1)}X_1 + X_1(I \otimes A^{(2)}+A^{(3)} \otimes I)^T = F_1 = M_1N_1^*$.
\State Perform a $QR$ decomposition, $Z_1=U_1R_1$, and let $U_1 = U_1(:,1:s_1)$ if $R_1(s_1+1,s_1+1)$ is small enough.
\State Use f-ADI to solve for $C_2=Z_2D_2Y_2^*$ where $C_2$ satisfies $(I \otimes (U_1^*A^{(1)}U_1)+A^{(2)} \otimes I)C_2+C_2 (A^{(3)})^T = (I \otimes U_1^*)F_2 = (I \otimes U_1^*)M_2N_2^*$.
\State Find a low rank decomposition of $C_2 \approx U_2\Sigma T_2^*$ using $Z_2, D_2$ and $Y_2$, and denote the rank by $s_2$.
\State Let $U_2={\rm reshape}(U_2, [s_1,n_2,s_2])$.
\State Let $U_3=\Sigma T_2^*$.
\State The solution $\mathcal{X}$ is in the tensor-train form with cores $U_1$, $U_2$, and $U_3$.
\end{algorithmic}   
\end{algorithm}

\red{Similarly, if all matricizations of $\mathcal{F}$ are low rank, and $A^{(1)}$, $A^{(2)}$, and $A^{(3)}$ are Minkowski singly separated, then we can solve for the solution in orthogonal Tucker format via the higher order singular value decomposition (HOSVD) method~\cite{de2000multilinear}. Each factor matrix of $\mathcal{X}$ is a matrix with orthonormal columns that span the column space of the matricization of $\mathcal{X}$, which satisfies the Sylvester equation:
\[ A^{(j)}X_{(j)} + X_{(j)}(I \otimes A^{(i)}+A^{(k)} \otimes I)^T = F_{(j)}, \]
where
\[ i = \begin{cases} 
      1, & j = 3, \\
      j+1, & j = 1,2,
   \end{cases} \qquad
   k = \begin{cases} 
      3, & j = 1, \\
      j-1, & j = 2,3.
   \end{cases}
\]
If solving shifted linear systems with $A^{(1)}$, $A^{(2)}$, and $A^{(3)}$ is fast, then we can use fADI to solve for the orthogonal column space of $X_{(j)}$, and use a direct method, such as a 3D Bartels--Stewart algorithm to solve for the core tensor~\cite{bartels1972solution}.}

\subsubsection{Poisson equation solver}
\red{Consider the example of Poisson equation in~\cref{sec:PoissonExample} with ultraspherical discretization~\cref{eq:pois_sp_sylv}. Since $A$ is a penta-diagonal matrix, we can solve shifted linear systems with $A^{-1}$ in $\mathcal{O}(n)$ time. Therefore, we can obtain a fast Poisson equation solver that computes the solution in tensor-train or orthogonal Tucker format. The complexity for the tensor-train format solver is $\mathcal{O}(n (\log n)^3 (\log (1/\epsilon))^3)$, where $0<\epsilon<1$ is the accuracy.}

\red{\Cref{fig:Poisson_ex} shows the running time of different discretized Poisson solvers. The red line represents the direct solver that converts~\eqref{eq:poisson_tt_bound_sp} into a huge linear system via Kronecker product. The green line represents an eigen decomposition solver, which computes the eigen decomposition of $A$ to diagonalize the equation, and solves each element of $\mathcal{X}$ directly by scaling. The blue line represents our fADI-based tensor-train solver. We can see as $n$ gets large, our algorithm is the winner.\footnote{The fADI solver is implemented in C++, while the direct and the eigen solvers are implemented in MATLAB. However, both backslash linear system solver and eigen decomposition are carried out in LAPACK, so our comparison of the three solvers is still fair. All timings are performed in MATLAB R2019a on the super computer of Cornell's Math department.}}

\begin{figure}
\centering
\begin{minipage}{0.49\textwidth}
\begin{overpic}[width=\textwidth]{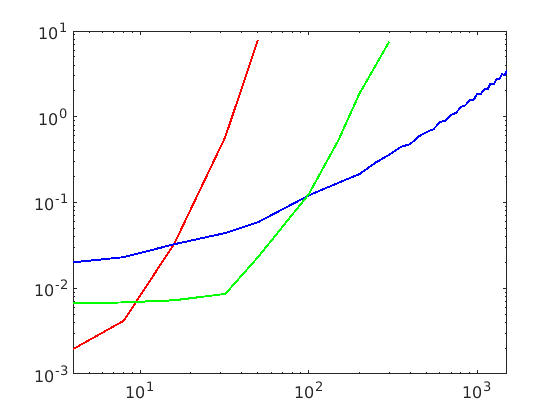}
\put(48,-2) {Size, $n$}
\put(-1,28) {\rotatebox{90}{Time (sec)}}
\put(32,42) {\rotatebox{65}{Direct}}
\put(51,44) {\rotatebox{57}{Eigen}}
\put(70,42) {\rotatebox{37}{fADI}}
\end{overpic}
\end{minipage}
\caption{\red{The execution time of a direct solver (red line), eigensolver (green line), and our fADI solver (blue line) of the spectrally discretized Poisson equation $-(u_{xx}+u_{yy}+u_{zz})=1$ on $[-1,1]^3$ with zero Dirichlet conditions with discretization size $n$ with $4\leq n\leq 1500$.}}
\label{fig:Poisson_ex} 
\end{figure}

%

\section*{Acknowledgements}
We have had discussions with David Bindel, Dan Fortunato, Leslie Greengard, and Madeleine Udell about the results in this paper and appreciate their thoughts and comments. We are grateful to Nicolas Boulle and Heather Wilber for carefully reading an earlier draft of this manuscript. 

\bibliography{references}
\bibliographystyle{siam}

\end{document}